\newcommand{\bel}[1]{\begin{equation}\label{#1}}
\newcommand{\be}{\begin{equation}}
\newcommand{\qe}{\end{equation}}
\newcommand{\R}{{\mathbb R}}
\newcommand{\N}{{\mathbb N}}
\newcommand{\Z}{{\mathbb Z}}
\newcommand{\C}{{\mathbb C}}
\newcommand{\Hmm}[1]{\leavevmode{\marginpar{\tiny%
$\hbox to 0mm{\hspace*{-0.5mm}$\leftarrow$\hss}%
\vcenter{\vrule depth 0.1mm height 0.1mm width \the\marginparwidth}%
\hbox to
0mm{\hss$\rightarrow$\hspace*{-0.5mm}}$\\\relax\raggedright #1}}}
\newtheorem{theorem}{Theorem}[section]
\newtheorem{lemma}[theorem]{Lemma}
\newtheorem{corollary}[theorem]{Corollary}
\newtheorem{definition}[theorem]{Definition}
\newtheorem{remark}[theorem]{Remark}
\newtheorem{prop}[theorem]{Proposition}
\begin{document}

\title[The wave equation on lattices and Oscillatory Integrals
]{The wave equation on lattices and\\ Oscillatory Integrals
}

\author{Cheng Bi}
\address{Cheng Bi: School of Mathematical Sciences, Fudan University, Shanghai 200433, China}
\email{\href{mailto:cbi21@m.fudan.edu.cn}{cbi21@m.fudan.edu.cn}}

\author{Jiawei Cheng}
\address{Jiawei Cheng: School of Mathematical Sciences, Fudan University, Shanghai 200433, China}
\email{\href{mailto:chengjw21@m.fudan.edu.cn}{chengjw21@m.fudan.edu.cn}}

\author{Bobo Hua}
\address{Bobo Hua: School of Mathematical Sciences, LMNS, Fudan University, Shanghai 200433, China; Shanghai Center for Mathematical Sciences, Fudan University, Shanghai 200433, China}
\email{\href{mailto:bobohua@fudan.edu.cn}{bobohua@fudan.edu.cn}}

\begin{abstract}
In this paper, we establish sharp dispersive estimates for the linear wave equation on the lattice $\Z^d$ with dimension $d=4$. Combining the singularity theory with results in uniform estimates of oscillatory integrals, we prove that the optimal time decay rate of the fundamental solution is of order $|t|^{-\frac{3}{2}}\log |t|$, which is the first extension of P. Schultz's results \cite{S98} in $d=2,3$ to the higher dimension. Moreover, we notice that the Newton polyhedron can be used not only to interpret the decay rates for $d=2,3,4$, but also to study the most degenerate case for all odd $d\geq 3$.
Furthermore, we prove $l^p\rightarrow l^q$ estimates as well as Strichartz estimates and give applications to nonlinear wave equations.
\end{abstract}

\maketitle

\section{Introduction}\label{sec-intro}

Discrete analogs of partial differential equations on graphs have been extensively studied in recent years. Most works focus on elliptic and parabolic equations, see  e.g. \cite{B17,Gri18}. References related to wave equations include \cite{FT04,LX19,LX22}.
In this article, we
consider the following nonlinear wave equation on the $d$-dimensional lattice $\mathbb{Z}^d$,
\begin{equation}\label{equ-orig}
    \left\{
    \begin{aligned}
        & \partial_{t}^2 u(x,t) - \Delta u(x,t) = F(u(x,t)), \\
        & u(x,0) = g(x),\quad\partial_t u(x,0) = f(x),
    \end{aligned}
    \right.
\end{equation}
where $x=(x_1,\cdots,x_d)\in \mathbb{Z}^d$ and $t\in \R$. The discrete Laplacian $\Delta$ is defined by
\begin{equation*}
    \Delta u (x,t) := \sum_{y\in\Z^d ,\, \mathrm{d}(x,y) = 1} \Big( u(y,t) - u(x,t)\Big),\quad \mbox{with}\ \  \mathrm{d}(x,y) = \sum_{j=1}^d |x_j-y_j|.
\end{equation*}

There are a number of physical applications where this equation appears in a natural way, mainly to describe the behaviour of wave propagation. One of the best known of them is the description of the vibrations of atoms inside crystals. A fundamental model is the monotonic chains, in which each atom vibrates as a simple harmonic oscillator and only feels the force of its nearest neighbours, see \cite{D93,FH10,Z72}.

As is shown in \cite{HH20}, there exists a nontrivial Tychnoff-type solution to \eqref{equ-orig} with $F,g,f \equiv 0$. In particular, this solution does not belong to $l^2$, hence it is not a ``physical'' solution. Therefore, in what follows we restrict attention to the so-called semigroup solution, see \eqref{equ-solu-semigroup} below.

Obtaining dispersive inequalities for linear dispersive equations usually serves as the first step in the study of nonlinear problems. Generally, it amounts to establishing a decay estimate for the $l^{\infty}$ norm of the solution in terms of time and the $l^1$ norm of the initial data.  
On Euclidean space $\R^d$, it is well-known that the decay rate of the solution to wave equation is of order $|t|^{-\frac{d-1}{2}}$, while for Schr\"{o}dinger equation, it decays like $|t|^{-\frac{d}{2}}$, see e.g. \cite{GV95,KT98,S77}. These results have been extended to more general framework such as Heisenberg groups and H-type groups, see for instance H. Bahouri et al. \cite{BFG16, BGX00} and M. Del Hierro \cite{D05}.

From now on we focus on \eqref{equ-orig}. Let $F,g \equiv 0$ and $\mathbb{T}^d := [-\pi,\pi]^d$ be the torus, we get the following Green's function by the discrete Fourier transform (see Section \ref{sec-preli}). 
\begin{equation}\label{equ-gree}
    G(x,t) := \frac{1}{(2\pi)^d}\int_{\mathbb{T}^d}  e^{ix \cdot \xi} \; \frac{\sin (t \, \omega (\xi))}{\omega (\xi)}\,d\xi,\quad \mbox{with} \ \ \omega(\xi) = \left({\sum_{j=1}^{d} (2-2\cos \xi_j)}\right)^{\frac{1}{2}},
\end{equation}
where $(x,t)\in\Z^d\times\R$ and $x\cdot \xi=\sum_{j=1}^d x_j\xi_j$ is the usual inner product.

To establish the dispersive estimate, we look for possible value of $\beta<0$  such that
\begin{equation}\label{Green}
    |G(x, t)|\leq C(1+|t|)^{\beta},\quad\forall\, (x,t)\in\Z^d\times\R,
\end{equation}
where the constant $C$ only depends on $d$ (or $C=C(d)$, in short). The symbols $C, c$ will be used throughout to denote positive constants, which may vary from one line to the next. Analogous results as (\ref{Green}) have been studied for other dispersive equations with Cauchy data such as the discrete Schr\"{o}dinger equation (DS, in short) 
\begin{equation*}
    i \partial_t u(x,t) + \Delta u(x,t) = 0
\end{equation*}
and the discrete Klein-Gordon equation (DKG)
\begin{equation}\label{equ-mdkg}
    \partial_t^2 u(x,t) - \Delta u(x,t) +m_*^2 \, u(x,t)= 0 ,
\end{equation}
where $m_*>0$ is the mass parameter. These equations are closely related to our model, that is, the discrete wave equation (DW), which is the vanishing mass limit ($m_*\rightarrow 0$) of the DKG.

For the DS, due to the special form of its Green's function, one can separate variables to reduce the problem to the case $d=1$, and obtain the sharp decay rate of $|t|^{-\frac{d}{3}}$ on $\mathbb{Z}^d$, see \cite[Theorem 3]{SK05}.
Unfortunately, the DKG and the DW fail to have the separation-of-variables property, which leads to more complicated analysis as dimension increases. For the DKG, A. Stefanov and P. G. Keverekidis \cite[Theorem 5]{SK05} established a sharp decay estimate of $|t|^{-\frac{1}{3}}$ on $\mathbb{Z}$. Later, V. Borovyk and M. Goldberg \cite[Corollary 2.5]{BG17} computed a decay of $|t|^{-\frac{3}{4}}$ on $\Z^2$.
Recently, J.-C. Cuenin and I. A. Ikromov \cite[Theorem 1]{CI21} extended the results to dimensions 2, 3 and 4. 

However, we shall see that the situation in the DW is harder than that of the DKG. 
Progress in the DW mainly comes from P. Schultz, who settled the cases $d=2$ and 3 in \cite{S98} with decay rates $|t|^{-3/4}$ and $|t|^{-7/6}$, respectively. He analyzed (\ref{equ-gree}) as well as a related oscillatory integral,
\begin{equation}\label{equ-inte}
    I(v,t) := \frac{1}{(2\pi)^d}\int_{\mathbb{T}^d} e^{i t \phi(v,\xi)}\frac{1}{\omega(\xi)}\, d\xi,\quad \mbox{with}\quad \phi(v,\xi):=v \cdot \xi - \omega(\xi),
\end{equation}
where $(v,t)\in\R^d\times\R$. Notice that the phase and amplitude are not smooth at the origin, which brings some difficulties. In light of (\ref{Green}) and the relation $G(x,t)=-\mbox{Im}\,I(x/t,t)$ (see (\ref{equ-Im}) below), in many occasions it is more convenient to establish inequalities of the following type,
\begin{equation}\label{equ-I}
    |I(v,t)| \leqslant C(1+|t|)^{\beta},\quad\forall\, (v,t)\in\R^d\times\R.
\end{equation}

We first look for critical points of $\phi(v,\cdot)$, which suggests a partition in the velocity space of $v$ into the following regions,
\begin{gather}\label{regions}
    \mbox{the exterior} \; (|v|>1), \;\;\mbox{the vicinity}\; (|v|\approx 1) \;\;\mbox{and the interior} \;(|v|<1)
\end{gather}
of the light cone, see also Section \ref{ssec-analysis}. The first two regions are handled by \cite{S98} in all dimensions $d\geq 2$. However, the situation becomes complicated inside the light cone, in which the degenerate critical points appear. By this we mean Hess$_\xi\phi(v_0,\xi_0)$, i.e. the Hessian of $\phi(v_0,\cdot)$ at $\xi=\xi_0$, is singular, where $v_0$ is some velocity in the light cone and the wave number $\xi_0$ is a critical point of $\phi(v_0,\cdot)$.
In this case, the stationary phase method breaks down and we have to consider each dimension one-by-one.  
 
In this article, we mainly consider the degenerate cases inside the light cone. This problem can be viewed as the stability of oscillatory integral under phase (linear) permutations. More precisely, for any $v_0\in\R^d$, since 
\begin{equation}\label{equ-lin-per}
    \phi(v,\xi)=(v-v_0)\cdot \xi+\phi(v_0,\xi),
\end{equation}
we first get the decay rate of \eqref{equ-inte} with $v=v_0$, then we prove that the estimate holds uniformly for $v$ in some neighborhood of $v_0$. Finally, we get the uniform estimate in $v\in\R^d$ by a finite covering on the velocity space. 
 
We notice that decay rates in $d=2,3,4$ can be well interpreted by the Newton polyhedron of $\phi$, see Section \ref{ssec-alt234}. Moreover, when 
$d=3,4$ the decay is governed by a single velocity $$v_0=\left(\tfrac{1}{\sqrt{2d}},\,\cdots,\,\tfrac{1}{\sqrt{2d}}\right)\in\R^d,$$ at which the most degenerate case appears, that is, the rank of Hess$_\xi \phi(v_0,\xi_0)$ attains the minimum $1$, see Lemma \ref{lem-dege}, \eqref{equ-most-dege} and \hyperref[table-2]{Table 1}. This fact may be useful in predicting decay rates in higher dimensions, and we obtain the decay rates for all odd $d\geq 3$ in such cases, see Theorem \ref{thm-conj} below.

The uniform estimate of oscillatory integral is noteworthy in itself. Roughly speaking, we consider the following integral
\begin{equation}\label{equ-OI}
    J(t,S+P,\, \psi):=\int_{\R^d}e^{it(S(x)+P(x))}\psi(x)\,dx,
\end{equation}
with proper phase $S$, amplitude $\psi$ and perturbation $P$. A natural question is whether the decay of $J(t,S,\,\psi)$ as $t\rightarrow\infty$ could extend to $J(t,S+P,\,\psi)$ when $P$ is ``small" enough. Unidimensional uniform estimate goes back to I. M. Vinogradov \cite{V80} and J. G. Van der Corput \cite{VDC23}. When $d=2$, the adapted coordinate system is important. A. N. Varchenko \cite{V76} proved the existence of such coordinate system for analytic functions (without multiple
components) in his pioneering work studying the connection between oscillatory integral and Newton polyhedra. Based on this, V. N. Karpushkin \cite{K84} gave an affirmative answer to the question above in the real-analytic setting. Later, Ikromov and D. M\"{u}ller \cite{IM11} extended the results to smooth phase of finite type when $P$ is linear. Besides, it is also effective to analyze the singularities, see for instance Arnold et al. \cite{AGV12} and J. J. Duistermaat \cite{D74}.

In higher dimensions, however, examples in \cite{V76} give an negative answer to the question above and show that the adapted coordinates may not exist. Moreover, even for $P=0$, it is still extremely difficult to determine the precise asymptotic behaviour of (\ref{equ-OI}) in many cases. Many results in such cases are motivated by Newton polyhedra, see e.g. \cite{V76}, D. H. Phong and E. M. Stein \cite{PS97} and M.Greenblatt \cite{G14}. For more details about the Newton polyhedra, we refer to Section \ref{sec-new}.

Since the seminal paper of Schultz \cite{S98} on dispersive estimates for the DW on $\Z^d$ with $d=2,3$, whether these results can be extended to higher dimensional lattices has been left as a major open problem. In this article, we extend the results of \cite{S98} to $d=4$, serving as the first step towards the above open problem. Besides, the most degenerate case in high dimensions and the Strichartz estimates are also studied for the first time. Our main result is the following.

\begin{theorem}\label{main-thm}
    There exists $C>0$ such that 
    \begin{equation*}
        |G(x,t)| \leqslant C(1+|t|)^{-\frac{3}{2}} \log (2+|t|),\quad \forall\, (x,t)\in\Z^4\times\R.
    \end{equation*}
\end{theorem}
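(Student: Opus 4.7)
The starting point is the identity $G(x,t)=-\mathrm{Im}\,I(x/t,t)$ for $t\neq 0$, which reduces Theorem \ref{main-thm} to the uniform-in-$v$ estimate $|I(v,t)|\leq C(1+|t|)^{-3/2}\log(2+|t|)$ on $\R^4\times\R$. Split the velocity space $\R^4$ according to \eqref{regions} into the exterior, the vicinity, and the interior of the light cone. In the exterior, $\phi(v,\cdot)$ has no critical points and integration by parts yields rapid decay; the vicinity is handled by the dimension-independent argument of Schultz \cite{S98}, which already gives at least the required rate. The work therefore lies in the interior, where $\phi(v,\cdot)$ admits finitely many critical points in $\mathbb{T}^4$ solving $v=\nabla_\xi\omega(\xi)$.

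For each fixed $v_0$ in the interior, introduce a smooth partition of unity on $\mathbb{T}^4$ localizing near each critical point $\xi^\ast=\xi^\ast(v_0)$; on the complementary region $|\nabla_\xi\phi(v_0,\xi)|\geq c$, integration by parts yields rapid decay, and this persists under small perturbations of $v_0$. At nondegenerate critical points, classical stationary phase gives $|t|^{-d/2}=|t|^{-2}$. At degenerate critical points, pass to adapted coordinates in the sense of Varchenko \cite{V76} and apply the Newton-polyhedron machinery (see \cite{K84,IM11}) to extract the decay rate from the Newton distance of $\phi(v_0,\cdot)$ at $\xi^\ast$. To propagate the local bound from $v_0$ to $v$ in a small neighborhood, use the decomposition $\phi(v,\xi)=(v-v_0)\cdot\xi+\phi(v_0,\xi)$ from \eqref{equ-lin-per} and invoke the stability of the decay rate under linear perturbations of the phase, i.e.\ the uniform oscillatory integral theory surrounding \eqref{equ-OI} with $P=(v-v_0)\cdot\xi$. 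A compactness argument then yields a finite subcover, reducing the interior to finitely many local analyses.

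The principal obstacle is the critical point of maximal degeneracy: as observed in the introduction, this corresponds to a single interior velocity $v_0$ at which $\mathrm{Hess}_\xi\phi(v_0,\xi^\ast)$ has minimal rank, while all other critical points yield strictly faster decay and contribute only to the constant. For this distinguished $v_0$, a Taylor expansion of $\omega$ about $\xi^\ast$ splits the phase in adapted coordinates into a nondegenerate quadratic in some directions and a higher-order remainder in the others whose Newton polyhedron is expected to produce an additional $|t|^{-1}\log|t|$ factor, combining with the $|t|^{-1/2}$ from the nondegenerate direction to yield the sharp rate $|t|^{-3/2}\log|t|$ and in particular the logarithm. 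The hardest step is to establish that this rate is preserved under the linear perturbation $(v-v_0)\cdot\xi$ uniformly for $v$ in a neighborhood of $v_0$: this is a genuinely four-dimensional instance of the semicontinuity problem discussed around \eqref{equ-OI}, which can fail in dimension $\geq 3$ (see \cite{V76}), so the argument must exploit the explicit structure of $\omega$ on $\mathbb{T}^4$ rather than appeal to any abstract semicontinuity principle.
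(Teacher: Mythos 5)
Your plan follows the same high-level route as the paper: reduce to $I(v,t)$ via \eqref{equ-Im}, partition velocities into the three regions of \eqref{regions}, dispatch the exterior and vicinity via integration by parts and Schultz, and in the interior localize at critical points, using stationary phase at nondegenerate ones, Newton-polyhedron considerations at degenerate ones, and a finite subcover in $v$. You also correctly identify that the log-loss rate $|t|^{-3/2}\log|t|$ comes from the single maximally degenerate interior velocity and that the crux is stability under the linear perturbation $(v-v_0)\cdot\xi$.

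The genuine gap, however, sits exactly at the step you yourself flag as ``the hardest'': you observe that abstract semicontinuity can fail for $d\geq 3$ and that one ``must exploit the explicit structure of $\omega$'', but you offer no mechanism for doing so. That is precisely where the paper's technical content lives. It works in Karpushkin's framework $M(h)\curlyeqprec(\beta,p)$ (Definition \ref{def-ue}), uses Lemmas \ref{lem-prin} and \ref{lem-quad} to strip off quadratic directions and reduce the localized phase at each degenerate $\xi_0\in\Sigma_k$ to one of the normal forms $x^{k+1}$, $x_1^2x_2-x_2^3$, or $x_1x_2x_3$, and then proves the perturbation-stable bound for each in Proposition \ref{prop-x1x2x3}; the case $x_1x_2x_3$ is itself nontrivial, requiring Karpushkin's Theorems 1 and 2(1a) of \cite{K83} together with a classification of binary forms and the $D_4^{\pm}$ singularities. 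Without something of this sort your plan has no proof of the key estimate. Also, ``pass to adapted coordinates in the sense of Varchenko'' is misleading in $\R^4$: adapted coordinates need not exist for $d\geq 3$; the paper instead passes to quasi-homogeneous principal parts by explicit linear changes of variables and only invokes adapted coordinates (Theorem \ref{thm-twodim}) after reducing to two variables. Two further omissions: the amplitude $1/\omega$ is singular at $\xi=0$, so the paper must split $I=I_1+I_2$ via a cutoff near the origin and treat $I_1$ by a separate argument from \cite{S98}, which your plan does not address; and there are three degenerate strata $\Sigma_1,\Sigma_2,\Sigma_3$ inside the light cone (Lemma \ref{lem-dege}), each requiring its own normal-form analysis (Theorem \ref{thm-pro}), not just the maximally degenerate one.
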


\begin{remark}\label{remark-main} 
    The estimate is sharp in the sense that there exist some vector $v_0 \in \R^4$ and a constant $C>0$ such that (cf. \cite{K94})
    \begin{equation*}
      |I(v_0,t)| \geqslant C\, t^{-\frac{3}{2}}\log t,\quad\mbox{as}\quad t \rightarrow +\infty.
    \end{equation*} 
\end{remark}

Our proof strategy is as follows. First, for all $d\ge 2$ we give the characterization of the degenerate critical points of $\phi$. Then we give a decomposition of the phase in the most degenerate case, which is also useful in establishing the dispersive inequality for all odd $d\ge 3$. Finally, when $d=4$, using proper coordinate changes, we reduce the phase to simple polynomials by considering several cases separately. Then we finish by combining the singularity theory with some results on stability of the oscillatory integral, which firstly appeared in \cite{K83}. 

In the terminology of Arnold \cite{AGV12}, the stable singularities which will appear in the proof are $A_k$ ($k\geq 1$) and $D_4^-$. Besides, the  polynomial $\xi_1\xi_2\xi_3$ plays an important role in the most degenerate case. Moreover, we point out that the pattern of our proof may be useful in the study of dispersive equations on general graphs.

Theorem \ref{main-thm} directly leads to the $l^p\rightarrow l^q$ estimates.
In the sequel, for any $a>0$, the notation ``$a^-$" means that one can choose any $\epsilon>0$ to replace $a$ with $a-\epsilon$. 

\begin{theorem}\label{thm-lplq}
   Let $d=4$ and $u$ be the solution to (\ref{equ-orig}) with $F,g \equiv 0$. If $1 \leqslant p < q \leqslant +\infty$ with $1/p-1/q \geqslant 1/2$, then there exists $C=C(p,q)$ such that
    \begin{equation*}
    \|u(t,\cdot)\|_{l^q} \leqslant C(1+|t|)^{-\beta_{p,q}}\|f\|_{l^p},\ \  \forall\,t\in\R,\quad\mbox{with}\ \ \beta_{p,q}=3^-\left(\tfrac{1}{p}-\tfrac{1}{q}-\tfrac{1}{2}\right).
    \end{equation*}
      Furthermore, if $(1/p,1/q)$ lies on the segment with vertices $(3/4,1/2)$ and $(1,0)$, then there exists $C=C(p,q)$ such that\begin{equation*}
    \|u(t,\cdot)\|_{l^q} \leqslant C(1+|t|)^{-\zeta_q}\|f\|_{l^p},\ \  \forall\,t\in\R,\quad\mbox{with} \ \ \zeta_q=\left(\tfrac{3}{2}\right)^- \left(1-\tfrac{2}{q}\right).
    \end{equation*}
\end{theorem}
This result can be used to prove the global existence of the solution for nonlinear equations with power type nonlinearity, see Section \ref{ssec-non}.

Remark that the DKG is considered in \cite{CI21} for $d=2,3,4$ with
a proof relying also on the analysis of singularities. The analog of \eqref{equ-inte} is 
\begin{equation}\label{equ-DKG}
    \widetilde{I}(v,t)=\int_{\mathbb{T}^d}e^{it(v\cdot\xi-\widetilde{\omega}(\xi))}\frac{1}{\widetilde{\omega}(\xi)}\,d\xi,\quad \mbox{with} \quad \widetilde{\omega}(\xi) = \left({m_*^2+\sum_{j=1}^d (2-2\cos \xi_j)}\right)^{\frac{1}{2}},
\end{equation}
where $m_*$ is as in \eqref{equ-mdkg}.  The situation of the DW is more complicated than that of the DKG, which is essentially because that the phase and amplitude in (\ref{equ-DKG}) are more regular compared with \eqref{equ-inte}. 
For the same reason, in the following Strichartz estimates we need to overcome more difficulties caused by the operator $\frac{1}{\sqrt{-\Delta}}$, while the corresponding operator for the DKG, i.e. $\frac{1}{\sqrt{\boldsymbol{1}-\Delta}}$, is easier to deal with, see \cite{CI21} and Section \ref{ssec-stri/tao}. In what follows, for any $1\le p\le +\infty$, let $p'$ be the conjugate index of $p$.

\begin{theorem}\label{thm-stri}
    Let $d=4$ and $u$ be the solution to (\ref{equ-orig}). If indices $q,r,\widetilde{q},\widetilde{r}$ satisfy
    \begin{equation}\label{tao}
        q,r,\tilde{q},\tilde{r} \geqslant 2,\quad \frac{1}{q} < \frac{3}{2}\left(\frac{1}{2}-\frac{1}{r}\right)\quad\mbox{and}\quad
        \frac{1}{\tilde{q}} < \frac{3}{2}\left(\frac{1}{2}-\frac{1}{\tilde{r}}\right),
    \end{equation}
    then there exists $C=C(q,r,\widetilde{q},\widetilde{r})$ such that
    \begin{equation*}
    \|u\|_{L^q_t l^r} \leqslant C\left(\|g\|_{l^2} + \|f\|_{l^{\frac{4}{3}}} + \|F\|_{L_t^{\tilde{q}'} l^{\frac{4\tilde{r}'}{4+\tilde{r}'}}}\right)
    \end{equation*}
\end{theorem}

This result is proved by a combination of Theorem \ref{main-thm}, the $l^p$ boundedness of $\frac{1}{\sqrt{-\Delta}}$ and the result in M. Keel and T. Tao \cite{KT98}.
The analog in the case $d=3$ can be proved similarly, see Theorem \ref{thm-stri-d=3}. Moreover, there is another Strichartz estimate following directly from Theorem \ref{thm-lplq}, see Theorem \ref{thm-stri/lplq} and Remark \ref{rmk-compare}. \\

Most of our results focus on $\Z^4$. However, in higher dimensions, we have the following estimates by analyzing a special class of Newton polyhedra.

\begin{theorem}\label{thm-conj}
    For any odd $d\geq 3$, there exists $C=C(d)>0$ such that
    \begin{equation}\label{equ-2d+1}
        |I(v_0,t)| \leqslant C (1+|t|)^{-\frac{2d+1}{6}},\quad \forall\, t\in \R, 
    \end{equation}
    where $v_0=(\tfrac{1}{\sqrt{2d}},\cdots,\tfrac{1}{\sqrt{2d}})\in\R^d.$
\end{theorem}

 As we mentioned before, when $d=3,4$, the decay rates are determined by the most degenerate case. We believe that this claim also holds for $d\ge 5$ and \eqref{equ-2d+1} holds uniformly in $v_0\in \R^d$, which is better than \cite[Remark 1(b)]{CI21}. They conjectured an estimate of order $|t|^{-\frac{2d+1}{6}}\log^{d-4}|t|$ for $d\ge 5$.

This paper is organized as follows. In Section \ref{ssec-basics} we recall basic concepts about the DW, then we give complete proofs for results on uniform estimates of the ocsillatory integrals in Section \ref{ssec-uni}. Section \ref{sec-pf} is devoted to proving Theorem \ref{main-thm}. In Section \ref{ssec-new} we recall some facts of Newton polyhedra, and we give application to the DW in Section \ref{ssec-alt234}, Theorem \ref{thm-conj} is proved in Section \ref{ssec-odd}. In Section \ref{sec-nonli}, we prove Theorem \ref{thm-lplq} and Theorem \ref{thm-stri}, then we give applications to the nonlinear equations. \\

\section{Preliminaries}\label{sec-preli}
\subsection{Basics on $l^p(\Z^d)$ and the DW}\label{ssec-basics}
Let $\mathbb{Z}^d$ be the standard integer lattice graph in $\mathbb{R}^d$. For $p \in [1,\infty]$, $l^p(\Z^d)$ is the $l^p$-space of functions on $\Z^d$ with respect to the counting measure, which is a Banach space with the norm 
\begin{equation*}
    ||h||_{l^p} :=\left\{
    \begin{aligned}
        &\left(\sum_{x \in \Z^d} |h(x)|^p\right)^{\frac{1}{p}},\ p\in[1,\infty),\\
        &\sup_{x\in\Z^d} |h(x)|,\ p=\infty.
    \end{aligned}\right.
\end{equation*}
We shall also use $|h|_p$ to denote the $l^p$ norm of $h$ for notational convenience. For $1\leqslant q,r < \infty$, the mixed space-time Lebesgue spaces $L^q_t l^r$ are Banach spaces endowed with the norms
\begin{equation*}
    ||F||_{L^q_t l^r} := \left( \int_{\R}\left(\sum_{x\in \Z^d}|F(x,t)|^r\right)^{\frac{q}{r}}dt\right)^{\frac{1}{q}},
\end{equation*}
 with natural modifications for the case $q=\infty$ or $r=\infty$. Moreover, for proper functions $h_1,h_2$ on $\mathbb{Z}^d$ we define the convolution product as
\begin{equation*}
    h_1*h_2(x) := \sum_{y \in \mathbb{Z}^d} h_1(x-y)h_2(y),\quad\forall\, x\in\Z^d.
\end{equation*}

The $l^p$ spaces are analogous to the $L^p$ spaces of functions defined on $\R^d$. Many results of the $L^p$ spaces extend to the lattice such as the H\"{o}lder inequality, Young's inequality for convolution and Riesz-Thorin interpolation theorem. One major difference is that the $l^p$ spaces are nested: 
$ l^p \subset l^q$,  $\forall\,1 \leqslant p \leqslant q \leqslant \infty$.

The discrete Fourier transform of a proper function $h$ is given by 
\begin{equation*}
    \mathcal{F}(h)(\xi)=\Hat{h}(\xi) := \sum_{x\in \mathbb{Z}^d} e^{-i\xi \cdot x}h(x),\quad\forall \,\xi \in \mathbb{T}^d,
\end{equation*}
while the inverse transform is 
\begin{equation*}
    \mathcal{F}^{-1}(h)(x)=\Check{h}(x) := \frac{1}{(2\pi)^d} \int_{\mathbb{T}^d} e^{i \xi \cdot x} h(\xi) d\xi,\quad\forall\,x\in \mathbb{Z}^d.
\end{equation*}
See e.g. \cite{W11} for more facts about discrete Fourier analysis. We may use the same notation to denote the Fourier transform and the inverse transform of a distrbution on $\R^d$. Applying the Fourier transform to both sides of (\ref{equ-orig}), we get 
\begin{equation*}
\left\{
    \begin{aligned}
        & \partial_t^2 \Hat{u}(\xi,t)   + \omega(\xi)^2\, \Hat{u}(\xi,t)=0,\\
        & \Hat{u}(\xi,0) = \hat{g}(\xi),~~\partial _t \Hat{u}(\xi,0) = \Hat{f}(\xi),\quad\forall\,\xi\in\mathbb{T}^d.
    \end{aligned}
\right.
\end{equation*}
The solution to this ordinary differential equation is 
\begin{equation*}
    \Hat{u}(\xi,t) = \cos(t\omega)\hat{g}(\xi)+\frac{\sin(t\omega)}{\omega} \hat{f}(\xi),\quad\forall\,\xi\in\mathbb{T}^d,
\end{equation*}
which gives that
\begin{equation*}
    u(x,t)=\frac{1}{(2\pi)^d} \int_{\mathbb{T}^d} e^{i \xi \cdot x} \left(\cos(t\omega)\hat{g}(\xi)+\frac{\sin(t\omega)}{\omega} \hat{f}(\xi)\right) d\xi,\quad\forall\,(x,t)\in\Z^d\times\R.
\end{equation*}

In the notion of operator theory, for any $f,g\in l^2$ and $t\in\R$,
\begin{equation}\label{equ-solu-semigroup}
    u(t)=\cos(t\sqrt{-\Delta})g+\frac{\sin (t\sqrt{-\Delta})}{\sqrt{-\Delta}}f.
\end{equation}

From now on we only consider the solution for zero initial position and a given initial velocity, i.e. $g \equiv 0$, unless otherwise stated. The other case can be treated similarly, see Section \ref{ssec-stri/tao}. Then we get $u=f*G$ with the Green's function $G(x,t)$ defined in (\ref{equ-gree}). Moreover, for $x=vt$ we have
\begin{equation}\label{equ-Im}
     G(x,t)=-\mbox{Im}\, I(v,t)
\end{equation} 
 by the fact that $\omega(\xi) = \omega(-\xi)$ and $I(v,t)=I(-v,t)$ (cf. (\ref{equ-inte})).\\

\subsection{Results on uniform estimates.}\label{ssec-uni}
 We recall some notions and results which were initiated from \cite{K83}. 
 In the sequel, let $B_{\R^d}(\xi,r)$ (resp. $B_{\C^d}(\xi,r)$) be the usual open ball in $\R^d$ (resp. $\C^d$) with center $\xi$ and radius $r$, while $\overline{B}_{\R^d}(\xi,r)$ (resp. $\overline{B}_{\C^d}(\xi,r)$) denotes its closure.

 \begin{definition}
     For any $r,s>0$, the space $\mathcal{H}_r(s)$ is defined as
\[
\mathcal{H}_r(s):=\left\{ P: \
\begin{aligned}
    &P \ \mbox{is holomorphic on}\ B_{\C^d}(0,r) \ \mbox{and continuous}\\ &\mbox{on}\ \overline{B}_{\C^d}(0,r),\ \mbox{and}\ |P({w})|<s,\ \forall \, {w}\in \overline{B}_{\C^d}(0,r)
\end{aligned}
\ \right\}.
\]
 \end{definition}

\begin{definition}\label{def-ue}
    Let $h:\mathbb{R}^d \rightarrow \mathbb{R}$ be real-analytic at 0. We write 
    \begin{equation*}
        M(h) \curlyeqprec (\beta,p) \ \  \mbox{for some} \ (\beta,p)\in (-\infty,0]\times\N,
    \end{equation*} 
    if for any $r>0$ sufficiently small, there exist $\epsilon>0$, $C>0$ and a neighbourhood $A\subset B_{\R^d}(0,r)$ of the origin such that
    \begin{equation}\label{equ-uniform}
         |J(t,h+P,\psi)| \leqslant C (1+|t|)^{\beta} \log^p(|t|+2) \|\psi\|_{C^N(A)},\ \forall\, (t,\psi,P)\in \R\times C_0^{\infty}(A)\times \mathcal{H}_r(\epsilon),
    \end{equation}
    where $J$ is as in \eqref{equ-OI}, $N=N(h)\in \N$ and 
    $$\|\psi\|_{C^N(A)}=\sup\big\{|\partial^{\gamma}\psi(\xi)|:\xi\in A,\,\gamma\in\N^d,|\gamma|\leq N\big\}.$$
\end{definition}

Some conventions are in order. Let $h,h_1,h_2$ be proper functions, $\xi\in\R^d$ and $(\beta_j,p_j)\in (-\infty,0]\times \N$ for $j=1,2$. Then 
\begin{itemize}
    \item[\textbf{(1)}] we write $M(h,\xi) \curlyeqprec (\beta_1,p_1)$, if
    \[
    M(\boldsymbol{\tau}_\xi h) \curlyeqprec (\beta_1,p_1),\quad \mbox{where}\ \ \boldsymbol{\tau}_\xi h(y)=h(y+\xi),\ \ \forall\, y\in\R^d;
    \]
    \item[\textbf{(2)}] we write $M(h_2)\curlyeqprec M(h_1)+(\beta_1, p_1)$, if
    \[
    M(h_1)\curlyeqprec (\beta_1,p_1) \quad\mbox{implies that}\ \   M(h_2)\curlyeqprec (\beta_1+\beta_2,p_1+p_2);
    \]
    \item[\textbf{(3)}]  we write $M(h_2)\curlyeqprec M(h_1)$, if $M(h_2)\curlyeqprec M(h_1)+(0,0)$;
    \item[\textbf{(4)}] we write $M(h)+(\beta_1,p_1)\curlyeqprec(\beta_1+\beta_2,p_1+p_2)$, if $M(h)\curlyeqprec (\beta_2,p_2)$.\\
\end{itemize}

Let $\alpha=(\alpha_1,\cdots,\alpha_d)\in\R^d$ be a weight with $\alpha_j> 0$ for all $j$. 
For any $c>0$, the associated one-parameter family of dilations is defined as
$$
\delta_{c}^\alpha(\xi):=(c^{\alpha_1}\xi_1,\cdots,c^{\alpha_d}\xi_d),\quad\forall \,\xi\in\R^d.
$$
\begin{definition}
     A polynomial $h$ on $\R^d$ is called $\alpha$-homogeneous of degree $\varrho\geq 0$, if 
       $$h\circ\delta_{c}^\alpha(\xi) = c^\varrho h(\xi), \quad\forall \,(\xi,c)\in\R^d\times(0,+\infty).$$
\end{definition}
  Let $\mathcal{E}_{\alpha,d}$ be the set of  $\alpha$-homogeneous polynomials of degree $1$, and $H_{\alpha,d}$ be the set of functions real-analytic at 0 with the associated Taylor's series having the form $\sum_{\gamma\cdot\alpha>1} a_\gamma \xi^\gamma$, i.e. each monomial is $\alpha$-homogeneous of degree greater than 1. 
 
 The following useful lemmas first appeared in \cite{K83}, we include complete proofs here.

\begin{lemma}\label{lem-nocritical}
    Let $h:\R^d\rightarrow \R$ be real analytic at $0$ and $\nabla h(0)\neq 0$, then $$M(h)\curlyeqprec (-n,0),\quad\forall\,n\in\N.$$ 
\end{lemma}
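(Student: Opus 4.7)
The plan is to invoke the classical non-stationary phase principle via iterated integration by parts, with Cauchy estimates on the holomorphic extension of $P$ used to force the constants to be uniform over the perturbation family $\mathcal{H}_r(\epsilon)$.

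First I would fix a small radius $r_0>0$ and a constant $c>0$ with $|\nabla h(x)|\geq 2c$ for every $x\in B_{\R^d}(0,r_0)$; this is possible by continuity because $\nabla h(0)\neq 0$. Next, for any $r\in(0,r_0/2)$ and any $P\in\mathcal{H}_r(\epsilon)$, I would apply Cauchy's integral formula to the holomorphic extension $\widetilde{P}$ on $\overline{B}_{\C^d}(0,r)$: since $|\widetilde{P}|<\epsilon$ there, each partial derivative obeys $|\partial^\gamma P(x)|\leq C_{|\gamma|,r}\,\epsilon$ on the real ball $A:=B_{\R^d}(0,r/2)$. Choosing $\epsilon>0$ small enough (depending on $r$ and $n$) then guarantees $|\nabla(h+P)(x)|\geq c$ on $A$, together with uniform bounds on all derivatives of $h+P$ up to order $n+1$.

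With these estimates in hand, I would introduce the transport operator
\begin{equation*}
L\varphi:=\frac{1}{it}\sum_{j=1}^{d}\frac{\partial_j(h+P)}{|\nabla(h+P)|^2}\,\partial_j\varphi,
\end{equation*}
which satisfies $L[e^{it(h+P)}]=e^{it(h+P)}$. The coefficients of its formal transpose $L^T$ and their derivatives are bounded on $A$ uniformly in $P\in\mathcal{H}_r(\epsilon)$ by the previous step. For $\varphi\in C_0^\infty(A)$, the compact support in $A$ kills all boundary contributions, so iterating integration by parts $n$ times yields
\begin{equation*}
\int_{\R^d}e^{it(h+P)}\varphi\,dx=\frac{1}{(it)^n}\int_{\R^d}e^{it(h+P)}\,(L^T)^n\varphi\,dx,
\end{equation*}
and a pointwise estimate $|(L^T)^n\varphi(x)|\leq C_n\|\varphi\|_{C^n(A)}$ produces the bound $C_n|t|^{-n}\|\varphi\|_{C^n(A)}$. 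For $|t|\leq 1$ the trivial majorization $\|\varphi\|_{L^1(A)}\leq |A|\,\|\varphi\|_{C^0(A)}$ is enough, and combining the two regimes gives the inequality \eqref{equ-uniform} with $\beta=-n$ and $p=0$.

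The only genuine point to monitor is the uniformity of all constants over $P\in\mathcal{H}_r(\epsilon)$; this is the sole reason the space $\mathcal{H}_r(\epsilon)$ is formulated via a holomorphic extension in the first place, and the Cauchy estimates recalled above control $P$ and its derivatives on the shrunken ball $A$ purely in terms of $\epsilon$, so the constants produced by the integration by parts argument are independent of the particular perturbation.
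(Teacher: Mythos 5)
Your argument is the standard non‑stationary phase estimate made uniform over the perturbation family, and it matches what the paper does: the paper simply says the lemma follows ``by integrating by parts directly,'' and your write‑up supplies exactly the details that claim suppresses (uniform lower bound on $\lvert\nabla(h+P)\rvert$ via Cauchy estimates on the holomorphic extension, transport operator, $n$ iterations, trivial bound for $\lvert t\rvert\le 1$). One small point worth being aware of: your proof produces a bound against $\|\varphi\|_{C^n(A)}$, so the derivative count grows with $n$; Definition~\ref{def-ue} states that $N$ ``only depends on $d$,'' which should be read as meaning $N$ is independent of $t$, $r$, $\varphi$ and $P$ (as in the proof of Lemma~\ref{lem-quad}), not literally independent of the exponent pair, so this is consistent with the paper's usage.
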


Lemma \ref{lem-nocritical} can be proved directly through integrating by parts.

\begin{lemma}\label{lem-prin}
    Let $h\in \mathcal{E}_{\alpha,d}$ and $P\in H_{\alpha,d}$, then
    $$M(h+P) \curlyeqprec M(h).$$
\end{lemma}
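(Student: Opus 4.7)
The plan is a single quasi-homogeneous rescaling $x = k^\alpha y$ with $k>0$ a small constant (depending on $P, \epsilon, r, r'$ but independent of $t$), chosen so that the built-in perturbation $P$ becomes a small addition to $h$ after scaling, while any further perturbation $Q \in \mathcal{H}_{r'}(\epsilon')$ of $h+P$ is simultaneously absorbed. Using the quasi-homogeneity $h(k^\alpha y) = k\,h(y)$, the integral in (\ref{equ-uniform}) for the phase $h+P+Q$ becomes $k^{|\alpha|}$ times an integral with phase $s\bigl(h(y) + \tilde P(y) + \tilde Q(y)\bigr)$ and amplitude $\tilde\varphi$, where $s:=tk$, $\tilde P := k^{-1} P(k^\alpha\,\cdot\,)$, $\tilde Q := k^{-1} Q(k^\alpha\,\cdot\,)$ and $\tilde\varphi := \varphi(k^\alpha\,\cdot\,)$. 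The hypothesis $M(h)\curlyeqprec(\beta,p)$ will then apply directly, once we arrange $\tilde P + \tilde Q \in \mathcal{H}_r(\epsilon)$ and $\mathrm{supp}\,\tilde\varphi\subset A$.

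The first ingredient is the smallness of $\tilde P$. Since $P\in H_{\alpha,d}$, only multi-indices with $\langle\alpha,\gamma\rangle > 1$ contribute to its Taylor series, and there are only finitely many such $\gamma$ with quasi-degree below any fixed level (as $\gamma\in\Z_{\ge 0}^d$ and $\alpha_j>0$); let $\rho_1 > 1$ be the minimal quasi-degree appearing in $P$. Then
\[
\tilde P(y) = \sum_\gamma c_\gamma^P\, k^{\langle\alpha,\gamma\rangle-1}\, y^\gamma = O(k^{\rho_1 - 1})
\]
uniformly on $\overline B_{\C^d}(0, r)$, whence $\tilde P \in \mathcal{H}_r(\epsilon/2)$ for $k$ small enough. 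The second ingredient is the absorption of $\tilde Q$: set $\epsilon':=k\epsilon/2$ and take $k$ also small enough that $k^{\alpha_{\min}} r \leq r'$, where $\alpha_{\min}:=\min_j\alpha_j$; then for $y\in\overline B_{\C^d}(0,r)$ one has $k^\alpha y \in \overline B_{\C^d}(0,r')$, so $|\tilde Q(y)| = k^{-1}|Q(k^\alpha y)| \leq \epsilon'/k = \epsilon/2$, giving $\tilde Q\in\mathcal{H}_r(\epsilon/2)$ and hence $\tilde P + \tilde Q \in \mathcal{H}_r(\epsilon)$. Setting $A':=k^\alpha A$ completes the arrangement $\mathrm{supp}\,\tilde\varphi\subset A$.

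Applying (\ref{equ-uniform}) for $h$ then yields
\[
\biggl|\int_{\R^d} e^{it(h+P+Q)(x)}\varphi(x)\,dx\biggr| \leq C k^{|\alpha|} (1+|s|)^\beta \log^p(|s|+2)\,\|\tilde\varphi\|_{C^N(A)},
\]
and since $k$ is fixed, $(1+|tk|)^\beta\log^p(|tk|+2)$ is bounded by $C_k(1+|t|)^\beta\log^p(|t|+2)$, while the chain rule gives $\|\tilde\varphi\|_{C^N(A)}\leq\|\varphi\|_{C^N(A')}$; this is exactly (\ref{equ-uniform}) for $h+P$ with the same $(\beta,p)$, proving $M(h+P)\curlyeqprec(\beta,p)$. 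The principal technical point is the absorption of $\tilde Q$: low-quasi-degree monomials of $Q$ get multiplied by the blowing-up factor $k^{\langle\alpha,\gamma\rangle-1}$, so a coefficientwise analysis fails, but the definition of $\mathcal{H}_{r'}(\epsilon')$ is a sup-norm bound on a holomorphic ball and scales cleanly by a single factor $k^{-1}$, which the tuning $\epsilon'\propto k$ is precisely designed to absorb.
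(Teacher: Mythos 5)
Your proof is correct and follows the paper's argument in essence: a quasi-homogeneous rescaling $x=k^\alpha y$ (your constant $k$ corresponds to $s/c_*$ in the paper), with $P$ absorbed because each of its monomials has quasi-degree strictly greater than $1$, and the outer perturbation absorbed by taking the new tolerance $\epsilon'$ proportional to $k$. The paper organizes the scaling through polydiscs $\mathbf{P}(c,\alpha)$ rather than balls, but the mechanism and the choice of $\epsilon'\sim k$ are the same.
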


\begin{proof}
    The idea is from \cite[Lemma 1]{K83}.  Firstly, for any $c>0$, we define 
    $$
\mathbf{P}(c,\alpha):=\left\{{w}=(w_1,\cdots,w_d)\in\C^d:|w_j|<\frac{c^{\alpha_j}}{2}\right\}.
$$

Assume that $M(h)\curlyeqprec (\beta,p)$, then for any $r>0$ there exist
$\epsilon_r>0$, $C_r>0$ and $A_r$ as in Definition \ref{def-ue} such that $\mathbf{P}(c_0,\alpha)\cap \R^d\subset A_r$ for some $c_0=c_0({r})>0$.

By the Cauchy inequality (cf. {\cite[Theorem 2.2.7]{Hor90}}), there exists $P_0>0$ such that
$$
P(z)=\sum_{\alpha\cdot \beta>1}a_\beta z^{\beta}\ \ \mbox{on} \ \ B_{\C^d}(0,r), \ \ \mbox{with} \ \ |a_\beta|\leq \frac{P_0}{r^{|\beta|}} \ \ \mbox{for all}\ \beta.
$$

Since $\alpha_j>0$ for all $j$, there exists $\sigma>0$ such that $\alpha\cdot\beta-1\geq \sigma|\beta|$ for all $\beta$ satisfying $\alpha\cdot \beta>1$. Thus we can find $s=s(\epsilon_r)>0$ such that $\mathbf{P}(s,\alpha)\subset B_{\C^d}(0,r)$ and then
\[
\left|\frac{c_0}{s} \, P\circ \delta_{sc_0^{-1}}^\alpha (\xi) \right|
\leq P_0\sum_{\alpha\cdot \beta>1} \left(\frac{s}{c_0}\right)^{\alpha\cdot \beta-1}\leq P_0\sum_{|\beta|\geq 2} \left(\frac{s}{c_0}\right)^{\sigma |\beta|}\leq \frac{\epsilon_r}{2},\ \  \forall\, \xi\in \mathbf{P}(c_0,\alpha).
\]

   By the same token, there exists $\widetilde{\epsilon}>0$ such that 
   $$c_0 s^{-1}\Upsilon \circ \delta_{sc_0^{-1}}^\alpha \in \mathcal{H}_{r}(\epsilon_r/2),\quad\forall\,\Upsilon\in \mathcal{H}_r (\widetilde{\epsilon}).$$
   
   Let $\widetilde{A}=\mathbf{P}(s,\alpha) \cap \R^d$, since $h\in \mathcal{E}_{\alpha,d}$, for any $\psi \in C_0^\infty (\widetilde{A})$ we have
   \[
   \mathbf{I}:=J(t,h+P+\Upsilon,\psi)=\left(\frac{s}{c_0}\right)^{|\alpha|}
   J\left(tsc_0^{-1},h+c_0s^{-1}\left(P_\delta+\Upsilon_\delta\right),\psi_\delta\right)
    \]
    by a change of coordinates, where  we use notation $\varphi_\delta = \varphi \circ \delta_{sc_0^{-1}}^\alpha$ with $\varphi = P,$ $\Upsilon$ or $\psi$. Since  supp$ \, \psi_\delta \subset A_r$ and $M(h)\curlyeqprec (\beta,p)$, we get
    \begin{align*}
     |\, \mathbf{I} \,|&\leq C_r\left(\frac{s}{c_0}\right)^{|\alpha|}\left(1+|t s c_0^{-1}|\right)^{\beta}\log^p (2+|t sc_0^{-1}|)\,\|\psi_{1}\|_{C^{N}}\leq \widetilde{C}(1+|t|)^{\beta}\log^p (2+|t|)\|\psi\|_{C^{N}},
 \end{align*}
 where $\widetilde{C}=C_r\left(sc_0^{-1}\right)^{|\alpha|+\beta}$. Then the proof is completed.\\
 \end{proof}

In the spirit of the stationary phase method, we have:

\begin{lemma}\label{lem-quad}
    Let $m, n\ge 1$ and 
    \begin{equation*}
        h_2(\xi,y) = h_1(\xi) + {Q}(y), \quad \forall\,(\xi,y) \in \R^n\times \R^m,
    \end{equation*}
    where ${Q}(y)= \sum_{j=1}^m c_j y_j^2$ with $c_j=\pm 1$ for all $j$.
    Then
      $$  M(h_2) \curlyeqprec M(h_1)+ \left(-\frac{m}{2},0\right) .$$
\end{lemma}

\begin{proof}
 Assume that $M(h_1)\curlyeqprec (\beta,p)$, for any $r>0$, we can find $\epsilon_r,C_r$ and $A_r$ as in Definition \ref{def-ue}.
By the Cauchy inequality and the contraction mapping principle, there exists $\widetilde{\epsilon}=\widetilde{\epsilon}(r)>0$ such that
for any $P\in \mathcal{H}_{r}(\widetilde{\epsilon})$  
 and $\xi\in B_{\C^n}(0,\frac{r}{4})$, we can find a unique $y_0=y_0(\xi)\in B_{\C^m} (0,\frac{r}{4})$ such that $$\nabla_{y} Z(\xi,y_0(\xi))=0,\quad \mbox{where}\quad Z(\xi,y) = {Q}(y) + P(\xi),\quad \forall\,(\xi,y) \in \R^n\times \R^m,$$ 
    then we know $Z(\cdot \, ,y_0(\cdot)) \in \mathcal{H}_{\frac{r}{4}}(\epsilon_{\frac{r}{4}})$ by the implicit function theorem (cf. e.g. \cite{Hor90}).
      
      Therefore, for any $\psi\in C_0^{\infty}(\R^{n+m})$ such that 
      \[
      \mbox{supp}\, \psi\subset {U_0}:=\left(B_{\C^n}\left(0,\frac{r}{4}\right)\cap A_r\right) \times B_{\R^m} \left(0,\frac{r}{4}\right),
      \]
       by the method of stationary phase (cf. \cite[Theorem 7.7.5]{HHor90}), there exist integer $k_0 \ge \frac{m}{2}-\beta$ and functions $\{\psi_j\}_{j=0}^{k_0-1}$ depending on the derivatives of $\psi$ with order up to $2k_0-2$ such that
 \begin{equation*}
       \int_{\R^m}e^{it Z(\xi,y)}\psi(\xi,y)\,dy = (2\pi i)^{\frac{m}{2}} \frac{t^{-\frac{m}{2}} e^{it Z(\xi,y_0(\xi))} }{\sqrt{\mbox{detHess}_y Z(\xi,y_0(\xi))}}  \sum_{j=0}^{k_0-1} t^{-j} \psi_j ( \xi) + \mathcal{R}(t), \ \ \forall\, t>0,
   \end{equation*}
   where $ \mathcal{R}(t) =  \mathcal{O}\left( t^{-k_0}\|\psi \|_{C^{2k_0}({U_0})}
\right) $. Here for two functions $f_1$ and $f_2$, we write $f_1=\mathcal{O}(f_2)$ if $|f_1(t)|\le C |f_2(t)|$ for some constant $C>0$ independent of $t$.
Thus, 
  \begin{align*}
      \int_{\R^{n+m}}&e^{it (h_2(\xi,y)+P(\xi,y)) }\psi(\xi,y)\,d\xi dy =\int_{\R^n}e^{it h_1(\xi)}\left(\int_{\R^m} e^{it Z(\xi,y)}\psi(\xi,y)\,dy\right)d\xi\\
      &= t^{-\frac{m}{2}} \sum_{j=0}^{k_0-1} t^{-j} \int_{\R^n} e^{it(h_1(\xi)+Z(\xi,y_0(\xi)))} \, \widetilde{\psi}_j(\xi)\,d\xi + \|\psi\|_{C^{2k_0}(U_0)}\,\mathcal{O}(t^{-k_0}),\ \forall\, t>0.
  \end{align*}
  Then our conclusion follows from the condition $M(h_1)\curlyeqprec (\beta,p)$ and the choice of $k_0$.\\
  \end{proof}



\begin{prop}\label{prop-x1x2x3}
The following assertions hold:
\begin{gather*}
     (a)\;M(\xi_1^{k+1}) \curlyeqprec \left(-\tfrac{1}{k+1},0\right),\quad\forall\, k\in\N;\\
     (b)\;M( \xi_1^2 \xi_2 - \xi_2^3) \curlyeqprec \left(-\tfrac{2}{3},0\right);
     \quad (c)\;M(\xi_1 \xi_2 \xi_3) \curlyeqprec (-1,1).
\end{gather*}
\end{prop}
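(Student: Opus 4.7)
The plan is to treat the three estimates separately, since each corresponds to a phase of a different character: an $A_k$ singularity in one variable for (a), a stable $D_4^-$ singularity in two variables for (b), and the non-simple quasi-homogeneous phase $x_1 x_2 x_3$ in three variables for (c), which is what produces the logarithmic factor.

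For (a) I would argue by a uniform one-dimensional van der Corput estimate. Given $h(x)=x^{k+1}+P(x)$ with $P\in\mathcal{H}_r(\epsilon)$, Cauchy's inequalities applied to the holomorphic extension of $P$ give $\|P^{(k+1)}\|_{L^\infty(B_{\R}(0,r/2))}\le C_k\epsilon/r^{k+1}$. Taking $\epsilon$ small forces $|h^{(k+1)}(x)|\ge (k+1)!/2$ on a neighbourhood of $0$, and the classical van der Corput lemma of order $k+1$ yields $|t|^{-1/(k+1)}\|\varphi\|_{C^1}$ with constants uniform in $P$. For (b), the phase $h(x_1,x_2)=x_1^2 x_2-x_2^3$ is quasi-homogeneous of degree $1$ with weight $\alpha=(1/3,1/3)$ and has an isolated critical point of Arnold type $D_4^-$ at the origin. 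Since this is a simple singularity, Tougeron's finite-determinacy theorem guarantees that, for $\epsilon$ small, every $h+P$ with $P\in\mathcal{H}_r(\epsilon)$ is equivalent to $h$ plus a constant via an analytic diffeomorphism $\Phi$ near $0$; absorbing $|\det D\Phi|$ into the cutoff reduces matters to estimating $\int e^{ith(y)}\tilde\varphi(y)\,dy$. A quasi-homogeneous scaling $y=(k^{1/3}z_1,k^{1/3}z_2)$ with $k=1/|t|$, or equivalently Lemma \ref{lem-prin} applied to $h$, gives the $|t|^{-|\alpha|}=|t|^{-2/3}$ bound.

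The genuine obstacle is (c). The plan is to exploit the product structure via Fubini. Writing $y=x_2 x_3$, the inner integral $\int_{\R} e^{itx_1 y}\varphi(x_1,x_2,x_3)\,dx_1$ is the one-dimensional Fourier transform of $\varphi(\cdot,x_2,x_3)$ at dual variable $-ty$, hence bounded by $C_N(1+t|y|)^{-N}$ for every $N$. The remaining integral in $(x_2,x_3)$ is controlled by pushing $u=x_2 x_3$ forward: on a bounded region this pushforward has density $\mu(u)\sim|\log|u||$ near $u=0$, so
\[
\int\!\!\int (1+t|x_2 x_3|)^{-N}\,dx_2\,dx_3\sim\int |\log|u||\,(1+t|u|)^{-N}\,du\sim|t|^{-1}\log|t|,
\]
which yields exactly the $(-1,1)$ exponent; note that a trivial bound on the inner integral on the bad set $\{|ty|\le 1\}$ would give $|t|^{-1}\log^2|t|$, and it is the oscillatory decay in $y$ that saves one $\log$. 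The hardest step will be extending this to a perturbation $P\in\mathcal{H}_r(\epsilon)$: the factorization is lost, and I would first use Lemma \ref{lem-prin} to absorb all $\alpha$-higher-order monomials of $P$ (with $\alpha=(1/3,1/3,1/3)$) into $h$, then handle any remaining lower-order monomials by a dyadic decomposition around the three coordinate axes and integration by parts on each piece, tracking the cutoffs carefully so that only a single $\log$ is picked up. This is the delicate bookkeeping originally carried out in \cite[\S 7]{K83}.
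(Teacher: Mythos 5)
Part (a) is fine and is essentially the paper's argument: control $P^{(k+1)}$ by Cauchy's inequalities on the complex ball, then apply van der Corput of order $k+1$; the paper just cites van der Corput directly.

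Part (b) has a genuine gap. Your invocation of Tougeron's finite-determinacy theorem is incorrect for the class of perturbations at hand. Finite determinacy says $h+P$ is right-equivalent to $h$ when $P$ vanishes to high enough order at the origin; but $\mathcal{H}_r(\epsilon)$ contains \emph{arbitrary} small analytic perturbations, including linear ones $P=ax_1+bx_2$ with $|a|,|b|$ small. Such a $P$ splits the $D_4^-$ critical point into up to three nondegenerate ones near the origin, so $h+P$ is certainly not right-equivalent to $h$ plus a constant, and there is no diffeomorphism $\Phi$ to absorb. Your fallback ``or equivalently Lemma \ref{lem-prin}'' has the same issue: Lemma \ref{lem-prin} handles only $P\in H_{\alpha,d}$, i.e.\ monomials of quasi-degree $>1$, and even then it only transfers an estimate already known to be uniform in $\mathcal{H}_r(\epsilon)$ — it does not supply the base case. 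What the paper actually uses here is Karpushkin's two-dimensional stability theorem (quoted as Theorem \ref{thm-twodim}, from \cite{K84}): in dimension $2$, if the unperturbed integral decays at rate $(\beta_S,p_S)$ then $M(S)\curlyeqprec(\beta_S,p_S)$ automatically. This semicontinuity is the nontrivial input, and it is precisely what fails in dimensions $\ge 3$, which is why one cannot argue by analogy with (c).

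Part (c) is also incomplete where it matters. Your Fubini/pushforward computation correctly recovers the $|t|^{-1}\log|t|$ bound for the \emph{unperturbed} integral, but the assertion being proved is the uniform estimate $M(x_1x_2x_3)\curlyeqprec(-1,1)$, and your treatment of the perturbation is a sketch (``dyadic decomposition around the three coordinate axes \dots tracking the cutoffs carefully so that only a single $\log$ is picked up'') that is explicitly deferred to \cite[\S 7]{K83} without being carried out. Again, Lemma \ref{lem-prin} only removes the quasi-higher-order part of $P$; the lower-order part (constants and linear terms, which dominate the phase near the coordinate hyperplanes) is exactly the difficult case, since it destroys the product structure your Fubini argument relies on, and can move or split the critical locus. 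The paper's route is structural rather than dyadic: it uses Karpushkin's reduction theorems \cite[Theorems 1, 2(1a)]{K83} to restrict $x_1x_2x_3$ to the unit sphere (whose critical points are nondegenerate, handled by Lemma \ref{lem-quad}), reducing the problem to a finite list of two-variable phases $x_3^2+\mathrm{sym}^2\Z^2$, $x_3^2+\mathrm{sym}^3\Z^2$, $x_3^2-x_1^2x_2^2$, each of which is then settled by the two-dimensional stability theorem (Theorem \ref{thm-twodim}) together with Varchenko's asymptotics; the exponents are then reassembled by \cite[Theorem 1]{K83}. If you want to make your dyadic approach rigorous you would in effect be reproving \cite[\S 7]{K83}, which is substantial work, not bookkeeping.
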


\begin{proof}
The assertion $(a)$ can be proved by the Van der Corput lemma, see. e.g. \cite[Chapter 8]{S93}. For assertion $(b)$, it is the normal form of $D_4^-$ singularity, which is one of the stable singularities, see e.g. \cite[Table 4.3.2]{D74}. 

Now we prove $(c)$, which was firstly considered in \cite{K83}.  Let 
$$h_1(\xi)=\xi_1\xi_2\xi_3\in \mathcal{E}_{\alpha_1,3}\quad\mbox{with} \ \ \alpha_1=\left(\tfrac{1}{3},\,\tfrac{1}{3},\,\tfrac{1}{3}\right),\quad\mbox{while}\ \ \kappa:=h_1|_{\mathbb{S}^{2}},$$
where $\mathbb{S}^2$ is the standard unit sphere in $\R^3$. A direct computation shows that any $\theta\in \mathbb{S}^2$ satisfying $\kappa(\theta)=\mbox{d} \kappa |_\theta = 0$ is a nondegenerate critical point of $\kappa$, where $\mbox{d}\kappa$ is the differential of $\kappa$. Noting that Definition \ref{def-ue} carries over to real analytic manifolds, by Lemma \ref{lem-quad} we have
   $ M(\kappa,\theta) \curlyeqprec (-1, 0).$ 
Then by \cite[Theorem 1, Theorem 2(1a)]{K83}, it suffices to prove the following estimate
\begin{equation}\label{h2}
    M(h) \curlyeqprec (-1,1),\quad\mbox{with}\ \ h\in \Big\{\xi_{3}^2+\mbox{sym}^2\mathbb{Z}^2,\ \xi_{3}^2+\mbox{sym}^3\mathbb{Z}^2,
    \
    \xi_3^2-\xi_1^2\xi_2^2 \Big\},
\end{equation}
where $\mbox{sym}^2\mathbb{Z}^2$ and $\mbox{sym}^3\mathbb{Z}^2$ denote the set of nonzero binary quadratic forms and binary cubic forms (not analytically diffeomorphic to $\xi_1^3$ or $\xi_2^3$) in $(\xi_1,\xi_2)$, 
respectively.

By Lemma \ref{lem-quad}, we only need to consider the cases $\mbox{sym}^3\mathbb{Z}^2$ and $\xi_1^2\xi_2^2$.
The former case can be reduced to $\xi_1^2 \xi_2$ or $D_4^{\pm}$ singularities by the arguments in \cite[Page 85]{M21}. Moreover, it holds that
\begin{equation*}
    M(\xi_1^2 \xi_2)\curlyeqprec \left(-\tfrac{1}{2},0\right) \quad\mbox{and}\quad M(\xi_1^2 \xi_2^2)\curlyeqprec \left(-\tfrac{1}{2} , 1 \right)
\end{equation*}
by \cite[Theorem 1]{IS21} and \cite[Theorem 1.6]{RSK21}, respectively. Therefore, we have proved (\ref{h2}). Following the notations in \cite{K83}, let 
$(\beta_1,p_1)=(-1,1)$ and $(\beta_2,p_2)=(-1,0)$, then we finish the proof by \cite[Theorem 1]{K83}.\\
\end{proof}

\section{Proof of Theorem \ref{main-thm}}\label{sec-pf}

The following arguments work for all dimensions $d\ge 2$, we shall focus on the case $d=4$ in Section \ref{sssec-sim}.

\subsection{Classification of the critical points}\label{ssec-analysis}
In view of \eqref{equ-Im}, we first give a detailed analysis of \eqref{equ-inte}. We choose a nonnegative function $\eta\in C_0^{\infty}(\R^d)$ with support in $(-2\pi,2\pi)^d$, which is non-vanishing on some neighborhood of $\mathbb{T}^d$ such that
\begin{equation*}
    \sum_{x\in\Z^d} \eta (\xi+2\pi x)= 1,\quad\forall\,\xi\in\R^d.
\end{equation*}
Let $\omega$ be as in \eqref{equ-gree}, since it is periodic, we have
\begin{equation}\label{eq-per}
    I(v,t)=\frac{1}{(2\pi)^d}\sum_{x\in\Z^d}\int_{\mathbb{T}^d}  e^{it\phi(v,\xi)} \, \frac{\eta(\xi+2\pi x)}{\omega(\xi)}d\xi=\frac{1}{(2\pi)^d}\int_{\R^d}e^{it\phi(v,\xi)}\frac{\eta(\xi)}{\omega(\xi)}d\xi.
\end{equation}

Noting that both the phase and amplitude in \eqref{eq-per} have singularity at 0, a direct calculation gives
\begin{equation}\label{equ-nabla}
    \nabla\omega(\xi)=\frac{1}{\omega(\xi)}\big(\sin\xi_1, \, \cdots, \,\sin\xi_d\big),\quad \forall \,\xi=(\xi_1,\cdots,\xi_d)\in\mathbb{T}^d\backslash\{0\}.
\end{equation}

 For any $v\in\R^d$, we define 
\begin{equation*}
    \mathcal{C}_{v}:=\Big\{\xi \in \mathbb{T}^d\backslash \{0\}: \nabla_{\xi}\phi(v,\xi)=0\Big\},
\end{equation*}
where $\nabla_\xi \phi(v,\xi)$ is the gradient of $\phi$ in $\xi$.
We also define 
\begin{equation}\label{equ-VV}
    \mathbf{V}(\xi):= \left({\sum_{j=1}^d \sin^2 \xi_j}\right)\left({\sum_{j=1}^d\left(2 - 2\cos \xi_j\right)}\right)^{-1},  \quad\forall\,\xi\in\mathbb{T}^d\backslash\{0\}.
\end{equation}

If $\mathcal{C}_v\ne \emptyset$, by \eqref{equ-nabla} and the expression of $\phi$, for any $\xi\in\mathcal{C}_v$, it holds that $|v|^2 = \mathbf{V}(\xi)<1$.
Therefore, $\mathcal{C}_v=\emptyset$ for any $v\in B_{\R^d}(0,1)^c$. On the other hand, \cite[Proposition 5.6]{S96} gives that $\mathcal{C}_v\neq \emptyset$ for any $v\in B_{\R^d}(0,1)$. So if we define 
\begin{equation*}
    \Sigma_{k} := \Big\{\xi \in \mathbb{T}^d\backslash\{0\}:  \mbox{corank\,Hess}\,\omega(\xi)=k \Big\} \, \bigcap  \left(\bigcup_{v\in B_{\R^d}(0,1)}\,\mathcal{C}_v\right)\quad\mbox{for}\ \  k=0,\cdots,d,
\end{equation*} 
then by the fact $$\mbox{Hess}_{\xi}\phi (v,\xi)=-\mbox{Hess}\,\omega(\xi),\quad\forall\,(v,\xi)\in\R^d\times\R^d,$$ the set of degenerate critical points and the corresponding velocity set can be formulated as 
\begin{equation}\label{equ-SO}
\Sigma:=\bigcup_{k=1}^{d}\Sigma_{k}\quad\mbox{and}\quad
\Omega:=\bigcup_{k=1}^{d}\Omega_k,\quad \mbox{with}\ \ \Omega_k=\nabla\omega(\Sigma_k),\ \ k=0,\cdots,d.
\end{equation} 
 Here $\nabla \omega(U)$ (resp. $(\nabla\omega)^{-1}(U)$) is the image (resp. preimage) of $U$ under the map $\nabla \omega$, and for simplicity we do not specify the dependence of these notations on $d$.

Now we give the characterization of $\Sigma$ and some useful properties of the velocity sets. Without loss of generality, only the first quadrant $[0,\pi]^d$ is considered by symmetry.

\begin{lemma}\label{lem-dege}
    Let $d\geq 2$, then
    \begin{itemize}

     \item[(i)] 
 $\Sigma_d=\emptyset$, and $\Sigma_1=\Sigma_1'\cup \Sigma_1''$, where $\Sigma_1''$ is the set of $\xi$ with exactly two components equal to $\frac{\pi}{2}$, while
    \begin{align*}
     \Sigma_1'=\left\{\xi\in [0,\pi]^d\backslash\{0\}:\sum_{j=1}^d \left(\cos \xi_j + \sec \xi_j\right) = 2d,\ \xi_j\neq \frac{\pi}{2}\ \mbox{for all}\ j\right\}.
     \end{align*}
     
    \item[(ii)] If $d\geq 3$, for any $2\le j\le d-1$, $\Sigma_j$ consists of $\xi$ with exactly ($j+1$) components equal to $\frac{\pi}{2}$.

    \end{itemize}
\end{lemma}

\begin{proof}
    For any $(\xi,\lambda)\in (\mathbb{T}^d\backslash\{0\})\times \R$, a direct computation yields
   \[
\mathcal{D}(\lambda,\xi)= \frac{1}{\omega(\xi)^{3}}\left(\omega(\xi)^3\prod_{j=1}^d \left(\frac{\cos\xi_j}{\omega(\xi)}-\lambda\right)-\sum_{i=1}^d \sin^2\xi_i \prod_{j\ne i} \left(\frac{\cos\xi_j}{\omega(\xi)}-\lambda\right)\right),
\]
 where $\mathcal{D}(\lambda,\xi)=\mbox{det}(\mbox{Hess}\,\omega(\xi)-\lambda\mathbb{I}_d)$ and $\mathbb{I}_d$ is the identity matrix.

First, if $\xi=(\xi_1,\cdots,\xi_d)$ with  
$k$ components equal to $\frac{\pi}{2}$ for some $1\le k\le d$, we assume that $\xi_{j}=\frac{\pi}{2}$ for $1\le j\le k$ without loss of generality. If $k=d$, it is clear that $$\mathcal{D}(\lambda,\xi)=(-1)^{d}\lambda^{d-1}(\lambda+\omega(\xi)^{-3}\,d).$$
If $k<d$, then $\xi_l\ne \frac{\pi}{2}$ for $k+1\le l\le d$ and $\mathcal{D}(\lambda,\xi)=\lambda^{k-1}\mathcal{D}_1(\lambda,\xi)$, where
\[
\mathcal{D}_1(\lambda,\xi)=\frac{(-1)^{k}}{\omega(\xi)^{d-k}}\left( \lambda\omega(\xi)^2+\frac{k}{\omega(\xi)}-\lambda\sum_{j=k+1}^d \frac{\sin^2\xi_j}{\cos\xi_j-\lambda\omega(\xi)} \right)\prod_{j=k+1}^d\left( \cos\xi_j-\lambda\omega(\xi)\right),
\]
which makes sense near $\lambda=0$, and $\mathcal{D}_1(0,\xi)\ne 0$. 

 Next, if $\xi=(\xi_1,\cdots,\xi_d)$ with $\xi_j\ne \frac{\pi}{2}$ for all $j$, then we get $\mathcal{D}(0,\xi)=0$ if and only if the equation $\omega(\xi)^2=\sum_{j=1}^d\frac{\sin^2\xi_j}{\cos\xi_j}$ holds. In this case we have 
\[
\mathcal{D}(\lambda,\xi)=\lambda\, \mathcal{D}_2(\lambda,\xi),\quad\mbox{with}\ \ \mathcal{D}_2(\lambda,\xi)=- \frac{1}{\omega(\xi)^{d+1}}\sum_{i=1}^d\frac{\sin^2\xi_i}{\cos\xi_i}\prod_{j\ne i}\left( \cos\xi_j-\lambda\omega(\xi) \right).
\]
Then it is clear that $\mathcal{D}_2(0,\xi)\ne 0$. Collecting all these facts, we complete the proof.\\
\end{proof}

\begin{corollary}\label{cor-b0}
        Let $d\geq 3$, then
        \begin{equation}\label{asse-a}
            \Omega\subset B_{\R^d}(0,b_0)\quad \mbox{for some}\ \  b_0=b_0(d)\in (0,1).
        \end{equation}
        Moreover,
        \begin{equation}\label{asse-b}
            (\nabla \omega)^{-1}(\Omega_{d-1})=\Sigma_{d-1}\quad \mbox{and}\quad\Omega_i\cap\Omega_j=\emptyset,\quad \forall\, i,j\ge 2,\ i\neq j.
        \end{equation}
\end{corollary}

\begin{proof}
    To prove \eqref{asse-a}, by \eqref{equ-VV} we know
    \[
    \lim_{\xi\rightarrow 0}\mathbf{V}(\xi)=1\quad \mbox{and}\quad \mathbf{V}(\xi)<1,\ \ \forall\,\xi\in\mathbb{T}^d\backslash\{0\}.
    \]
    Then it suffices to prove $0\notin \overline{\Sigma}$, where we use $\overline{\Sigma}$ to denote the closure of $\Sigma$. In view of Lemma \ref{lem-dege}, we only need to show $0\notin \overline{\Sigma_1'}$. This can be deduced by the fact that
    \[
    \sum_{j=1}^{d} \left(\cos\xi_j+\sec\xi_j\right)-2d = \sum_{j=1}^d \frac{(\cos\xi_j-1)^2}{\cos\xi_j}=0,\quad\forall\,\xi\in\Sigma_1'.
    \]

    Now we prove \eqref{asse-b}. The first assertion is a direct consequence of \eqref{equ-nabla}. For the second assertion, since $\Sigma_d=\emptyset$ by Lemma \ref{lem-dege}, we argue by contradiction that $\Omega_i\cap\Omega_j\ne \emptyset$ with some $2\le i < j \le d-1$. Then there exist $\xi^*\in \Sigma_i$ and $\xi^{**}\in\Sigma_j$ such that
    \[
    \frac{1}{\omega(\xi^*)}\big(\sin\xi^*_1, \, \cdots, \,\sin\xi^*_d\big) = \frac{1}{\omega(\xi^{**})}\big(\sin\xi^{**}_1, \, \cdots, \,\sin\xi^{**}_d \big).
    \]
Without loss of generality, we assume $\xi^{*}_l=\frac{\pi}{2}$ for $1\le l\le i+1$ by Lemma \ref{lem-dege}, then we consider the following two cases separately. 

First, if there exists $1\le j_0\le i+1$ such that $\xi^{**}_{i_0}=\frac{\pi}{2}$, then $\omega(\xi^*)=\omega(\xi^{**})$. Since $j>i$, there exists $ j_1> i+1$ such that $\xi_{j_1}^{**}=\frac{\pi}{2}$. Then it holds that $\frac{1}{\omega(\xi^{**})}=\frac{\sin\xi_{j_1}^{*}}{\omega(\xi^*)}$, which is a contradiction.

Next, if $\xi_{l}^{**}\ne \frac{\pi}{2}$ for any $1\le l\le i+1 $, then we get $\omega(\xi^*)>\omega(\xi^{**})$ by the fact that $\frac{1}{\omega(\xi^{*})}=\frac{\sin\xi_{1}^{**}}{\omega(\xi^{**})}$. However, there exists $j_2>i+1$ such that $\xi_{j_2}^{**}=\frac{\pi}{2}$, by the same token we get $\omega(\xi^{**})>\omega(\xi^{*})$, a contradiction.    In conclusion, we finish the proof of \eqref{asse-b}.\\
\end{proof}

 By Corollary \ref{cor-b0}, for any 
$d\ge 2$, \emph{the most degenerate case} appears exactly at
\begin{equation}\label{equ-most-dege}
   (\xi,v)\in \Sigma_{d-1}\times \Omega_{d-1}=\left(\tfrac{\pi}{2},\cdots,\tfrac{\pi}{2}\right)\times\left(\tfrac{1}{\sqrt{2d}},\cdots,\tfrac{1}{\sqrt{2d}}\right)\in \R^d\times\R^d.
\end{equation}
\\

\subsection{Reductions}\label{sssec-decay} 
 As we mentioned before, outside the light cone there is no critical points, this case is simple. In fact, by Lemma \ref{lem-nocritical}, \eqref{equ-Im} and \cite[Proposition 5.3]{S96}, for any $\mathrm{r}>1$ and $N\in\N$, there exists $C=C(\mathrm{r},N)$ such that
 $$
 |G(tv,t)|\le C(1+|t|)^{-N},\quad \forall\,(v,t)\in B_{\R^d}(0,\mathrm{r})^c \times \R.
 $$

 However, in the vicinity of the light cone (cf. \eqref{regions}), we need more analysis. By \cite[Proposition 2.1, Proposition 2.2, Proposition 3.10]{S98}, we have:

\begin{lemma}\label{lem-schu}
    Let $d \geqslant 2$, then there exist $C=C(d)$ and $\mathbf{c}=\mathbf{c}(d)\in (b_0,1)$ such that 
    \begin{equation*}
        |G(tv,t)| \leqslant C (1+|t|)^{-\frac{d}{2}} ,\quad\forall\, (v,t)\in B_{\R^d}(0,\mathbf{c})^c \times \R.
    \end{equation*}
\end{lemma}

Its proof relies on the method of stationary phase, the properties of Airy function and the Green's function in the continuous setting. 

 By Lemma \ref{lem-schu} and \eqref{equ-Im}, it suffices to consider $I(v,t)$ for $v\in B_{\R^d}(0,\mathbf{c})$.

Since $\frac{1}{\omega}$ has singularity at 0, we choose $\chi\in C_0^{\infty}(\R^d)$ supported near the origin, then
\begin{align}\label{equ-I1+I2}
\begin{split}
    I(v,t)&=\frac{1}{(2\pi)^d}\int_{\R^d}e^{it\phi(v,\xi)}\frac{\eta(\xi)}{\omega(\xi)}\chi(\xi)\,d\xi+\frac{1}{(2\pi)^d}\int_{\R^d}e^{it\phi(v,\xi)}\frac{\eta(\xi)}{\omega(\xi)}\left(1-\chi(\xi)\right)d\xi\\
&=:I_1(v,t)+I_2(v,t). 
\end{split}   
\end{align}

By \cite[Proposition 2.3]{S98}, there exists $C=C(d)>0$ such that 
\begin{equation}\label{equ--I1}
    |I_1(v,t)|\le C|t|^{-d+1},\quad\forall\,(v,t)\in B_{\R^d}(0,\mathbf{c})\times \R.
\end{equation}

Now we consider $I_2$. Since 
$ \Omega\subset B_{\R^d}(0,\mathbf{c})\subset \Omega_0\cup\Omega$,
we have:
\begin{lemma}\label{lem-nondege}
    Let $d\ge 2$, for any $v_0\in \Omega_0\backslash\Omega$, there exist a neighbourhood $V$ of $v_0$ and $C>0$ such that
    \begin{equation*}
       |I_2(v,t)|\leq C(1+|t|)^{-\frac{d}{2}},\quad \forall\, (v,t) \in V \times \R.
    \end{equation*}
\end{lemma}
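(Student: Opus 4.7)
The plan is to combine the parametric method of stationary phase with a continuous tracking of the critical points via the implicit function theorem. Let $K:=\mbox{supp}(\eta\cdot(1-\chi))$, a compact subset of $\R^d\setminus\{0\}$ on which the amplitude $\eta(1-\chi)/\omega$ is smooth. Since $v_0\in\Omega_0$, every $\xi\in\mathcal{C}_{v_0}\cap K$ is nondegenerate, and such points being isolated together with the compactness of $K$ forces the set to be finite, say $\mathcal{C}_{v_0}\cap K=\{\xi^{(1)},\ldots,\xi^{(M)}\}$. At each $(v_0,\xi^{(j)})$ the Jacobian in $\xi$ of $\nabla_{\xi}\phi(v,\xi)=v-\nabla\omega(\xi)$ equals $-\mbox{Hess}\,\omega(\xi^{(j)})$, which is nonsingular by assumption; the implicit function theorem therefore yields a neighbourhood $V_j\ni v_0$ and a smooth map $v\mapsto\xi^{(j)}(v)$ with $\xi^{(j)}(v_0)=\xi^{(j)}$ and $\nabla_{\xi}\phi(v,\xi^{(j)}(v))=0$. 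After shrinking $V:=\bigcap_j V_j$ we may assume that $\det\mbox{Hess}_\xi\phi(v,\xi^{(j)}(v))$ is bounded away from $0$ uniformly in $v\in V$.

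I then choose disjoint closed balls $\overline{B}_j\subset K$ centered at $\xi^{(j)}$ so small that $\xi^{(j)}(v)$ lies well inside $B_j$ for all $v\in V$ and $\mbox{Hess}_\xi\phi(v,\xi)$ is nonsingular throughout $V\times\overline{B}_j$. On the complement $K':=K\setminus\bigcup_j B_j$, the gradient $\nabla_\xi\phi(v,\cdot)$ does not vanish, and by continuity together with compactness of $\overline{V}\times K'$ there exists $c>0$ such that $|\nabla_\xi\phi(v,\xi)|\geq c$ uniformly on $V\times K'$. Choosing a smooth partition of unity $1=\rho_0+\sum_{j=1}^M \rho_j$ subordinate to $\{K',B_1,\ldots,B_M\}$, the $\rho_0$-contribution to $I_2$ can be controlled by repeated integration by parts along the operator $L:=-(it)^{-1}|\nabla_\xi\phi|^{-2}(\nabla_\xi\phi)\cdot\nabla_\xi$, yielding a bound $C_N(1+|t|)^{-N}$ for arbitrary $N$, with constants uniform in $v\in V$.

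For each $j$, the remaining piece $I_2^{(j)}$ has $\xi^{(j)}(v)$ as its unique critical point inside $B_j$, and the Hessian there is nondegenerate with determinant bounded below. Hence Hörmander's stationary-phase theorem with parameters (\cite[Theorem 7.7.5]{Hor90}) delivers an asymptotic expansion whose leading term decays like $|t|^{-d/2}$ with a coefficient proportional to
\begin{equation*}
\frac{\eta(\xi^{(j)}(v))\,(1-\chi(\xi^{(j)}(v)))}{\omega(\xi^{(j)}(v))\,|\det\mbox{Hess}_\xi\phi(v,\xi^{(j)}(v))|^{1/2}}\,e^{it\phi(v,\xi^{(j)}(v))},
\end{equation*}
with a remainder of order $|t|^{-d/2-1}$. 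Since all quantities in the coefficient depend continuously on $v$ and are bounded on the compact closure of $V$, we obtain $|I_2^{(j)}(v,t)|\leq C(1+|t|)^{-d/2}$ uniformly in $v\in V$. Summing over $j$ and combining with the $\rho_0$-estimate completes the proof.

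The main obstacle is not the stationary-phase computation itself but the uniformity of the constant in $v$: one must simultaneously track each critical point smoothly via the implicit function theorem and produce a quantitative lower bound on $|\nabla_\xi\phi(v,\xi)|$ on the region separated from the critical points. Both are addressed by compactness once $V$ is taken small enough, after which the rest is a standard application of stationary phase.
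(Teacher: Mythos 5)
Your proof is correct and reaches the same conclusion by a more hands-on route. The paper disposes of Lemma \ref{lem-nondege} by citing Lemma \ref{lem-quad} together with a partition of unity in $\xi$: near each nondegenerate critical point the phase reduces (after diagonalizing the Hessian and invoking Lemma \ref{lem-prin}) to a nondegenerate quadratic form, and the uniformity in $v$ is absorbed into the $M(\cdot)\curlyeqprec(\cdot,\cdot)$ framework through the decomposition $\phi(v,\xi)=(v-v_0)\cdot\xi+\phi(v_0,\xi)$, treating the linear term $(v-v_0)\cdot\xi$ as a small element of $\mathcal{H}_r(\epsilon)$. You instead track the critical points explicitly with the implicit function theorem and invoke the parametric stationary phase theorem directly, obtaining the $|t|^{-d/2}$ bound with constants controlled by continuity on a compact neighbourhood. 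Both amount to stationary phase with a parameter; the paper's route is terser because it reuses the Karpushkin machinery already set up for the degenerate cases, while yours is self-contained and does not need the abstract $M(\cdot)$ calculus. One small point of hygiene: the family $\{K',B_1,\dots,B_M\}$ is not an open cover of $K$ (since $K'$ is closed), so the partition of unity should be taken subordinate to a slight open thickening, or built from bump functions $\rho_j$ supported in $B_j$ with $\rho_j\equiv 1$ near $\xi^{(j)}$ and $\rho_0:=1-\sum_j\rho_j$; with that adjustment the argument goes through as written.
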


Lemma \ref{lem-nondege} follows from Lemma \ref{lem-quad} and a partition of unity in the space of wave number $\xi$. 
Therefore, we are left with the degenerate case. 
For any $v_0 \in \Omega$ and
$\xi\in\mbox{supp}\, \eta(1-\chi)=:\mathcal{U}$, assume for now that 
\begin{equation}\label{equ-Mxi}
    M\big(\phi(v_0,\cdot), \, \xi \big) \curlyeqprec (\beta_{\xi},p_{\xi})\quad\mbox{for some} \ \ (\beta_\xi,p_\xi)\in (-\infty,0)\times \N.
\end{equation} 
Then we can find a neighborhood $\mathbf{m}_{\xi}$ of $\xi$ as in Definition \ref{def-ue} such that $\mathcal{U}\subset\cup_{\xi\in\mathcal{U}}\mathbf{m}_{\xi}$ and \eqref{equ-uniform} holds. 
By a finite covering and a partition of unity, there are open sets, say $\{\mathbf{m}_{j}\}_{j=1}^{N_0}$, and nonnegative functions $\{\varphi_j\}_{ j=1}^{N_0}$ such that
\begin{equation*}
    \mathcal{U}\subset \bigcup_{j=1}^{N_0} \mathbf{m}_{j}\ \ \text{and}\ \  \sum_{j=1}^{N_0} \varphi_j  \equiv 1\  \ \mbox{on}\ \ \mathcal{U}, \quad \mbox{where} \ \ \varphi_j \in C_0^{\infty}(\mathbf{m}_{j}), \ j=1,\cdots,N_0.
\end{equation*}
Therefore,
\begin{equation*}
   (2\pi)^d\, I_2(v,t) = \sum_{j=1}^{N_0} \int_{\R^d} e^{it\phi(v, y)}\;\frac{\eta(y)(1-\chi(y))}{\omega(y)}\varphi_j(y)\,dy=:\sum_{j=1}^{N_0}I_2^{j}(v,t).
\end{equation*}

By \eqref{equ-Mxi}, Definition \ref{def-ue} and \eqref{equ-lin-per}, for any $1\le j\le N_0$, there exist  
$\epsilon_j$ and $C_j$ such that
\begin{equation*}
    |I_2^j(v,t)| \leqslant C_j(1+|t|)^{\beta_{\xi_j}}\log^{p_{\xi_j}}(2+|t|),\quad \forall\, (v,t)\in B_{\R^d}(v_0,\epsilon_j)\times \R,
\end{equation*}
and we obtain a similar result for $I_2$ by summing up these inequalities, with an exponent $(\beta,p)=\max_{1\le j\le N_0}\{(\beta_{\xi_j},p_{\xi_j})\}$ in the lexicographic order, i.e. $\beta=\max_{1\le j\le N_0} \beta_{\xi_j}$, and $p$ is the maximum among those $p_{\xi_j}$ such that $\beta=\beta_{\xi_j}$.

Now we establish \eqref{equ-Mxi}. If $\xi \notin \mathcal{C}_{v_0}$, we use Lemma \ref{lem-nocritical}. If $\xi\in \Sigma_0$, we use Lemma \ref{lem-nondege}. 
Finally, we consider the case $\xi\in\Sigma$ and associated $v_0\in\Omega$ (cf. \eqref{equ-SO}).

When $d=4$, it suffices to consider the following three cases by Corollary \ref{cor-b0}.

\begin{prop}\label{thm-pro}
    Let $d=4$, then
    \[
    M\big(\phi(v_0,\cdot),\, \xi_0\big) \curlyeqprec (\beta,p),\ \ \mbox{with}\ \   (\beta,p)= 
        \begin{cases}
           (-3/2,1), & \text{if $~(v_0,\xi_0) \in \Omega_3\times \Sigma_3$}; \\
            (-5/3,0), & \text{if $~(v_0,\xi_0) \in (\Omega_2\backslash\Omega_1)\times\Sigma_2$}; \\
            (-3/2,0), & \text{if $~(v_0,\xi_0) \in \Omega_1\times\Sigma_1$}.
        \end{cases}
    \]
\end{prop}

Once proving Proposition \ref{thm-pro}, we complete the proof of Theorem \ref{main-thm} by a finite covering on the velocity space $B_{\R^d}(0,\mathbf{c})$.
All cases in $d=2,3,4$ and the corresponding decay rates are listed in \hyperref[table-2]{Table 1} (cf.  \cite{S98} for the case $d=2,3$), we point out that they can also be interpreted by Newton polyhedra, see Section \ref{ssec-alt234}. \\

\begin{table}[h]
\renewcommand\arraystretch{1.5}
    \begin{center}
        \caption{Disperive estimates for the DW}\label{table-2}
        \begin{tabular}{|c|c|c|c|c|}
             \hline
              & $\Sigma_1'$ & $\Sigma_1''$ & $\Sigma_2$ & $\Sigma_3$ \\ 
             \hline
             dim 2 & $|t|^{-5/6}$ & $|t|^{-3/4}$ &  &  \\
             \hline
             dim 3 & $|t|^{-4/3}$ & $|t|^{-5/4}$ & $|t|^{-7/6}$ &  \\
             \hline
             dim 4 & $|t|^{-3/2}$ & $|t|^{-3/2}$ & $|t|^{-5/3}$ & $|t|^{-3/2}\log(|t|)$\\
             \hline
        \end{tabular}
    \end{center}
\end{table}

\subsection{The most degenerate case}\label{ssec-most}
Before proving Proposition \ref{thm-pro},
we notice that the decay rate is determined by the most degenerate case (cf. \eqref{equ-most-dege})
    when $d= 3, 4$. We give a  decomposition of the phase in such cases on $\Z^d$ for all $d\ge 3$, which will also be used in the proof of Theorem \ref{thm-conj}, see Section \ref{ssec-odd}.
    
    In the sequel, let $\{\boldsymbol{e_{j}}\}_{j=1}^d$ be the standard coordinate vector in $\R^d$, we define a weight
    \begin{equation}\label{def-weight}
        \boldsymbol{w_d}:=\tfrac{1}{3}(\boldsymbol{e_{1}}+\cdots +\boldsymbol{e_{d-1}})+\tfrac{1}{2}\boldsymbol{e_{d}}\in\R^d.  
    \end{equation}
    This choice comes from the principle face of the Newton polyhedra, see Section \ref{sec-new}.  For any $(a,b)\in\R^2$ and $m\ge 1$, we set
    \begin{equation}\label{equ-def-Q}
        \mathbf{Q}_{a,b}^{m}(z):=a\left(\sum_{j=1}^{m}z_j\right)^3-b\sum_{j=1}^{m} z_j^3, \quad \forall\, z\in\R^{m}.
    \end{equation}

\begin{lemma}\label{lem-expand}
    Let $d\geq 3$, $\phi$ be as in \eqref{equ-inte} and $(v_0,\xi_0)\in \Omega_{d-1}\times \Sigma_{d-1}$, then there exist an invertible linear transform $\Phi$ on $\R^d$, a constant $c_\phi=c_{\phi}(d)$ and $R\in H_{\boldsymbol{w_d},d}$ such that 
    \begin{equation*}
        \phi(v,\Phi(y)+\xi_0) = c_\phi+v\cdot\xi_0+\Phi(y)\cdot(v-v_0)+y_d^2+\mathbf{Q}_{1,1}^{d-1}(y')+R(y)
    \end{equation*}
       holds for $v\in\R^d$ and $y$ near the origin, where $y'=(y_1,\cdots,y_{d-1})$. 
\end{lemma}
 
\begin{proof}
   By the Taylor's formula of $\phi$ at $\xi_0$, for some 
$a_j>0$, $1 \le j \le 3$, we have
    \begin{equation*}
        \phi(v,\xi+\xi_0)= c_\phi+v\cdot\xi_0+(v-v_0)\cdot\xi+a_1\left(\sum_{j=1}^d \xi_j\right)^2-a_2\left(\sum_{j=1}^d \xi_j\right)^3+a_3\sum_{j=1}^d \xi_j^3+W(\xi),
    \end{equation*}
     with
     $
     W(\xi) = \sum_{k=4}^\infty  W_k(\xi)$ and $W_k\in \mbox{span}_{\R} \mathbf{W}_k$, where for any $k\ge 4$,
     \[
      \mathbf{W}_k := \left\{\prod_{l=1}^d\left(\sum_{j=1}^d\xi_j^{\,q_l}\right)^{i_l} : \ \sum_{l=1}^d q_l\, i_l=k, \ q_l,i_l\in\N, \ q_l\ \mbox{odd} 
     \right\}
     \]
     is a subset of homogeneous  polynomial of degree $k$. 
    Then a change of coordinates $$\xi=\widetilde{\Phi}(z),\ \ \mbox{with}\ \ z_j=\xi_j,  \ j=1,\cdots,d-1,\ \ \mbox{and}\ \  z_d=\sum_{j=1}^d \xi_j$$
    gives that
    \begin{align}\label{eq-lin}
        \phi(v,\widetilde{\Phi}(z)+\xi_0)=c_\phi+v\cdot\xi_0+ (v-v_0)\cdot \widetilde{\Phi}(z) + a_1  z_d^2+a_3\,\mathbf{Q}_{1,1}^{d-1}(z')+\widetilde{W}(z),
    \end{align}
    where $z'=(z_1,\cdots,z_{d-1})$ and 
    \begin{equation*}
    \widetilde{W}(z)=-a_2\, z_d^3-3a_3\, 
 z_d^2\sum_{j=1}^{d-1}z_j+3a_3 \,z_d\sum_{j=1}^{d-1}z_j^2+W\circ\widetilde{\Phi}(z).
    \end{equation*}
    One can easily check that $W\circ\widetilde{\Phi}\in H_{\boldsymbol{w_d},d}$ and thus  $\widetilde{W}\in H_{\boldsymbol{w_d},d}$.
    Then we get the conclusion by absorbing  coefficients in (\ref{eq-lin}).\\
\end{proof}

\begin{remark}\label{rem-per}
     For any given $\epsilon,\, r>0$,  the ``perturbation part" $y \mapsto \Phi(y)\cdot(v-v_0)$ is linear, and it belongs to $\mathcal{H}_{r}(\epsilon)$ if $v$ is close to $v_0$. 
    Besides, by the splitting lemma (cf. e.g. \cite[Theorem 4.13]{M21}) we may obtain a result similar to Lemma \ref{lem-expand}, but the ``perturbation part" is not linear any more.\\
\end{remark}

\subsection{Proof of Proposition \ref{thm-pro}}\label{sssec-sim}

Let $d=4$. By Lemma \ref{lem-dege} and the symmetry, we can assume
\begin{gather*}
\begin{split}
    \Sigma_3 =\left(\tfrac{\pi}{2},\tfrac{\pi}{2},\tfrac{\pi}{2},\tfrac{\pi}{2}\right),\quad\Sigma_2  =\left\{
        \left(\tfrac{\pi}{2},\tfrac{\pi}{2},\tfrac{\pi}{2},\xi_*\right): \xi_* \neq \tfrac{\pi}{2} \right\}\\
    \mbox{and} \ \ \Sigma_1 =\Sigma_1'\cup\left\{
        \left(\tfrac{\pi}{2},\tfrac{\pi}{2},\xi_*,\eta_*\right): \xi_*,\eta_* \neq \tfrac{\pi}{2} \right\}.
\end{split}
\end{gather*}

We consider the three cases separately.

\subsubsection{$(v_0,\xi_0)\in\Omega_3\times\Sigma_3$}

In this case,  by Lemma \ref{lem-expand} we get
\begin{align*}
    \phi(v,\Phi(y)+\xi_0)=c_\phi+v\cdot\xi_0+y_4^2+\Phi(y)\cdot(v-v_0)+\mathbf{Q}_{1,1}^3(y')+R(y),
\end{align*}    
where $R\in H_{\boldsymbol{w_4},4}$.  Noticing that 
$$\mathbf{Q}_{1,1}^3(y')= (y_1+y_2+y_3)^3-y_1^3-y_2^3-y_3^3=3(y_1+y_2)(y_1+y_3)(y_2+y_3),$$ 
a change of coordinates gives
\begin{align*}
    \phi_1 = c_\phi+v\cdot\xi_0+z_4^2+z_1z_2z_3+{\Phi_1}(z)\cdot(v-v_0)+{R_1}(z),
\end{align*}
where the means of $\phi_1$, $\Phi_1$ and $R_1$ are obvious.
Moreover, we have 
\begin{align*}
   M\left(z_4^2 +3z_1z_2z_3+{R_1}\right)\curlyeqprec M\left(z_4^2 +3z_1z_2z_3\right)\curlyeqprec M\left(z_1z_2z_3\right)+\left(-\frac{1}{2},0\right)\curlyeqprec\left(-\frac{3}{2},1\right)
\end{align*}
by Lemma \ref{lem-prin}, Lemma \ref{lem-quad} and Proposition \ref{prop-x1x2x3} (c). Then our conclusion follows from Remark \ref{rem-per}.

\subsubsection{$(v_0,\xi_0)\in(\Omega_2\backslash\Omega_1)\times\Sigma_2$}\label{ssec-co2}

In this case, $\xi_0 = \left(\frac{\pi}{2},\frac{\pi}{2},\frac{\pi}{2},\xi_*\right)^T$ with $\xi_* \neq \frac{\pi}{2}$. A direct computation shows that the zero-eigenvectors of $\mbox{Hess}_\xi \phi(v_0,\xi_0)$ are
$$\boldsymbol{\gamma_1}=(1,-1,0,0)^T\quad\mbox{and}\quad\boldsymbol{\gamma_2}=(1,1,-2,0)^T.$$ 
 Then we have, for some constant $c$, 
 \begin{equation*}
    \begin{aligned}
        \phi(v, \mathbb{A} y+ \xi_0)= & \, c+(v-v_0)\cdot\mathbb{A} y +\frac{\sqrt{2}}{2}\omega(\xi_0)^{-1} (y_2^3 - y_1^2 y_2 )+ \mathcal{V}(y)\\
        &- \frac{\sqrt{2}}{8}\omega(\xi_0)^{-3}\Big(y_3^2 + 2 y_3\, y_4 \sin \xi_* - \left(2\omega(\xi_0)^2\cos \xi_* - \sin ^2 \xi_*\right) y_4^2\Big),
    \end{aligned}
\end{equation*}
 where the matrix
  $
  \mathbb{A}=(\boldsymbol{\gamma_1},\boldsymbol{\gamma_2},\boldsymbol{e_{3}},\boldsymbol{e_{4}}).$
Moreover,
 $\mathcal{V}\in H_{\alpha_*,4}$ with $\alpha_*=\left(\frac{1}{3},\frac{1}{3},\frac{1}{2},\frac{1}{2}\right)$. 
 A change of coordinates in $(y_3,y_4)$  gives that  
\begin{equation*}
    \phi = c + (v-v_0)\cdot \widetilde{\mathbb{A}}y +\Big[a_4\, y_3^2 + a_5 y_4^2 - y_1^2y_2 + y_2^3 \Big] + \widetilde{\mathcal{V}}(y), \quad \mbox{with some} \ \ a_4,a_5\ne 0.
\end{equation*}
The polynomial in the square bracket is the normal form of $D_4^-$ singularity, then Lemma \ref{lem-quad} and Proposition \ref{prop-x1x2x3} (b) give that
\begin{align*}
    M\left( a_4 y_3^2 + a_5 y_4^2- y_1^2y_2 + y_2^3  \right)\curlyeqprec M\left( - y_1^2y_2 + y_2^3  \right)+(-1,0)\curlyeqprec \left(-\frac{5}{3},0\right),
\end{align*}
and we finish the proof by Remark \ref{rem-per} again.

\subsubsection{$(v_0,\xi_0)\in\Omega_1\times\Sigma_1$}\label{ssec-co3}
In this case, a combination of the splitting lemma \cite[Theorem 4.13]{M21} and Lemma \ref{lem-quad} directly gives a uniform estimate with exponent $(-\frac{3}{2},0)$, which is better than the first case. 

As a consequence, the proof of Proposition \ref{thm-pro} is completed.\\

\section{Newton Polyhedra}\label{sec-new}
\subsection{Basic concepts and results}\label{ssec-new}

We recall some basics of Newton polyhedra, see also \cite{AGV12,IM11,V76}. For the concepts in convex analysis and polytopes, we refer to \cite{BJ77,AB83}.

Let $S$ be a function on $\R^d$ and real-analytic at 0, we shall always assume that 
\begin{equation}\label{con-phi}
    S(0)=0\quad\mbox{and}\quad \nabla S(0)=0.
\end{equation}
Consider the associated Taylor series at 0,
\begin{equation}\label{equ-series}
    S(\xi) = \sum_{\gamma\in \mathbb{N}^d} s_{\gamma} \, \xi^{\gamma}.
\end{equation}
The set 
$\mathcal{T}(S) := \{\gamma \in \mathbb{N}^d :s_{\gamma} \neq 0 \}$ is called the {Taylor support}. The \emph{Newton polyhedron} $\mathcal{N}(S)$ is the convex hull of the set
\begin{equation*}
    \bigcup_{\gamma \, \in\, \mathcal{T}(S)} \left(\gamma + \mathbb{R}^d_{+}\right), \quad\mbox{where}\ \ \R^d_+=\{\xi\in \R^d: \xi_j\geq 0, \, j=1,\cdots,d \}.
\end{equation*}
 
Let $\mathcal{P}$ be a face  of $\mathcal{N}(S)$, we call 
$
    S_{\mathcal{P}}(\xi) := \sum_{\gamma \in \mathcal{P}}s_{\gamma}\, \xi^{\gamma} 
$
the $\mathcal{P}$-part of the series in (\ref{equ-series}). Moreover, we say $S$ is $\mathbb{R}$-nondegenerate if for any compact face $\mathcal{P}$, $\nabla S_{\mathcal{P}}$ is nonvanishing on $(\mathbb{R}\backslash\{0\})^d$, that is,
\begin{equation}\label{equ-nondegenerate}
    \bigcap_{j=1}^d \big\{\xi : {\partial_{j}S_{\mathcal{P}}}(\xi)=0\big\}\ \subset\ \, \bigcup_{j=1}^d \big\{\xi : \xi_j=0 \big\}.
\end{equation}

If $\mathcal{T}(S)\ne \emptyset$, the {Newton distance} $d_S$ and the center $\boldsymbol{d}_S$ are defined as
\begin{equation*}
        d_S = \inf \{\varrho>0:(\varrho,\varrho,\cdots,\varrho) \in \mathcal{N}(S)\} \quad \mbox{and} \quad \boldsymbol{d}_S:=(d_S,d_S,\cdots,d_S)\in\R^d.
\end{equation*}
 The principle face $\pi_S$ of $\mathcal{N}(S)$ is the face of minimal dimension containing 
$\boldsymbol{d}_S$ and $k_S:=d-\dim_{\R^d} (\pi_S)$, where $\dim_{\R^d}(U)$ is the affine dimension of $U$ in $\R^d$. The $\pi_S$-part, denoted by $S_{\pi}$, is called the principle part of $S$. 

Since $d_S$ depends on the choice of the coordinate systems, the {height} of $S$ is given by
\begin{equation*}
    h_S:=\sup \{d_{S,\xi}\},
\end{equation*}
where the supremum is taken over all local analytic coordinate systems $\xi$ which preserves the origin, and $d_{S,\xi}$ is the Newton distance in coordinates $\xi$. A given coordinate system $\xi_a$ is said to be {adapted} to $S$ if $d_{S,\xi_a} = h_S$. 

In our setting, the following result, derived by \cite[Proposition 0.7, 0.8]{V76}, can be used to recognize whether a given coordinate system is adapted.
\begin{prop}\label{prop-?}
Let $d=2$, if $\boldsymbol{d}_S$ lies on a compact face $\Gamma$ of $\mathcal{N}(S)$ with $\Gamma\subset \{\xi: a_1 \xi_1 + \xi_2 = a_2\}$ for some $a_1,a_2\in\N$, 
then the coordinate system is adapted if $S_{\Gamma}(\cdot\,,1)$ does not have a real root of multiplicity larger than $\frac{a_2}{1+a_1}$.
\end{prop}

Now let $S$ be as in \eqref{con-phi} and $J(t,S,\psi)$ be as in \eqref{equ-OI}, 
where $\psi\in C_0^\infty(\R^d)$ with support near the origin. Then the following asymptotic expansion holds (cf. e.g. \cite[pp. 181]{AGV12}),
\begin{equation}\label{equ-expan}
    J(t,S,\psi) \approx \sum_{\tau} \sum_{\rho=0}^{d-1} c_{\tau,\rho,\psi} \, t^\tau \log^\rho t,\quad \mbox{as} \ \ t\rightarrow+\infty,
\end{equation}
where $\tau$ runs through finitely many arithmetic progressions not depending on $\psi$, which consists of negative rational numbers.

Let $(\tau_S,\rho_S)$ be the maximum over all pairs $(\tau,\rho)$ in \eqref{equ-expan} under the
lexicographic ordering such that for any neighborhood $U$ of the origin, there exists $\psi\in C_0^{\infty}(U)$  for which $c_{\tau_S,\rho_S,\psi}\ne 0$. We call $\tau_S$ the oscillation index of $S$ at 0 and $\rho_S$ its multiplicity.

The following useful result is a consequence of the main theorem in Varchenko \cite{V76},
see also Gilula \cite[Theorem 2.3]{G18}.

\begin{theorem}\label{thm-new}
    Let $S$ be $\mathbb{R}$-nondegenerate and \eqref{con-phi} hold,  then $\tau_S \le -{d_S}^{-1}$ and $\rho_S \le k_S-1$, that is, for any $\psi\in C_0^\infty(\R^d)$ with support near the origin, there exists $C>0$ such that
    \begin{equation*}
       |J(t,S,\psi)|\leqslant C (1+|t|)^{-\frac{1}{d_S}} \log^{k_S - 1}(2+|t|), \ \ \forall\, t\in \R.
    \end{equation*}
    Moreover, if $d_S>1$, then $\tau_S=-d_S^{-1}$.
\end{theorem}

If $d=2$, \cite[Theorem 2.1]{K84} and \cite[Theorem 0.6]{V76} give the following stronger result:
\begin{theorem}\label{thm-twodim}
    Let $S:\mathbb{R}^2 \rightarrow \mathbb{R}$ be as in (\ref{con-phi}),  then there exist coordinate systems that are adapted to $S$.
    Moreover,  $
    M(S)\curlyeqprec (\tau_S,\rho_S)
    $ and $
    \tau_S=-h_S^{-1}.
    $
    \\
\end{theorem}

\subsection{Application to the DW}\label{ssec-alt234}

The decay rates in \hyperref[table-2]{Table 1} can be interpreted by the Newton polyhedra.
First, let $\phi$ be as in \eqref{equ-inte}. At its critical point the principle part can be expressed in the form $\phi_{\pi}=\phi_1+q_\phi$  under suitable coordinate systems, where $q_\phi$ is the quadratic term separating from other variables, see  \hyperref[table-3]{Table 2}. For the details, we refer to \cite{S98} for the case $d=2,3$ and Section \ref{sssec-sim} for the case $d=4$.

\begin{table}[h]
\renewcommand\arraystretch{1.5}
    \begin{center}
        \caption{Cases in $d=$ 2,3,4}\label{table-3}
        \begin{tabular}{|c|c|c|c|c|c|}
             \hline
               &  
            & $\phi_\pi=\phi_1(+q_{\phi})$ & $\mathcal{N}(\phi_1)$& $d_{\phi_1}$ & $(\beta_{\phi_\pi},p_{\phi_\pi})$ \\ 
            \hline
             dim 2 & $\Sigma_1'$ 
            & $\xi_1^3  (+ \xi_2^2)$ & \multirow{2}{*}{$[3,\infty)$} & \multirow{2}{*}{3} & $(-5/6,0)$\\
             \cline{1-3}\cline{6-6}
             dim 3 & $\Sigma_1'$ 
            & $\xi_1^3 (+\xi_2^2+ \xi_3^2)$ &  &  & $(-4/3,0)$\\ 
             \hline
             dim 2 & $\Sigma_1''$ 
            & 
            $\xi_1^2 + \xi_1\xi_2^2$ & \multirow{2}{*}{$\{(\lambda+1,2-2\lambda):\lambda \in [0,1]\}+\R^2_+$} & \multirow{2}{*}{$\dfrac{4}{3}$} & $(-3/4,0)$\\
            \cline{1-3}\cline{6-6}
              dim 3 & $\Sigma_1''$ 
             & $\xi_1^2 + \xi_1\xi_2^2 (+\xi_3^2)$ &  &  & $(-5/4,0)$\\
             \hline
              dim 3 & $\Sigma_2$ 
             & $\xi_1^2\xi_2-\xi_2^3(+\xi_3^2)$ & \multirow{2}{*}{$\{(2\lambda,3-2\lambda):\lambda \in [0,1]\}+\R^2_+$} & \multirow{2}{*}{$\dfrac{3}{2}$} & $(-7/6,0)$\\
             \cline{1-3}\cline{6-6}
              dim 4 & $\Sigma_2 $ 
             &$\xi_1^2\xi_2-\xi_2^3 (+\xi
             _3^2+\xi_4^2)$ &  &  & $(-5/3,0)$\\
             \hline
              dim 4 & $\Sigma_3$ 
             &$\xi_1\xi_2\xi_3 (+\xi_4^2)$ & $(1,1,1) + \R^3_+$ & 1 & $(-3/2,1)$\\
             \hline
        \end{tabular}
    \end{center}
\end{table}
If we translate the critical point of $\phi$ to the origin, then we get $M(\phi)\curlyeqprec M(\phi_\pi)$ by Lemma \ref{lem-prin}. Moreover, we have: 
\begin{prop}
    For each case in \hyperref[table-3]{Table 2},  it holds that 
 $$M(\phi_\pi)\curlyeqprec (\beta_{\phi_\pi},p_{\phi_\pi})=(\tau_{\phi_\pi},\rho_{\phi_\pi}).$$
\end{prop}
\begin{proof}
    By Lemma \ref{lem-quad}, it suffices to consider the phase $\phi_1$. The case $\xi_1^3$ is handled by the Van der Corput lemma, while the case $\xi_1\xi_2\xi_3$ is proved by Proposition \ref{prop-x1x2x3} (c). 
    
    For the left two cases 
    $$\phi_1(\xi)= \xi_1^2+\xi_1\xi_2^2\quad \mbox{or}\quad \phi_1(\xi)=\xi_1^2\xi_2-\xi_2^3,$$  Proposition \ref{prop-?} shows that they are expressed in adapted coordinate systems. In fact, the supporting line of $(\phi_1)_{\pi}$ is 
    $$\{\xi: 2\xi_1+\xi_2=4\}\quad\mbox{or}\quad \{\xi: \xi_1+\xi_2=3\},\ \ \mbox{respectively}.$$
    In each case $(\phi_1)_\pi(\cdot\,,1)=\phi_1(\cdot\,,1)$ has real root of multiplicity $1$. Then using Theorem \ref{thm-twodim} and Theorem \ref{thm-new}, we know $$M(\phi_1)\curlyeqprec (-d_{\phi_1}^{-1},k_{\phi_1}-1)=(-d_{\phi_1}^{-1},0).$$ 
    The proof is completed. \\
\end{proof}


\begin{remark}
$(1)$  For the case $\xi_1\xi_2\xi_3$, we get a decay rate of order $|t|^{-1}\log^2 |t|$ directly by Theorem \ref{thm-new}. However, a change of coordinates $$\xi_1=z_1,\, \xi_2=z_2-z_3,\, \xi_3=z_2+z_3$$
gives a new phase $z_1(z_2^2-z_3^2)$, the associated decay rate is of order $|t|^{-1}\log |t|$ by Theorem \ref{thm-new} again. It is sharp, see Remark \ref{remark-main}.   $(2)$ As mentioned in Remark \ref{rem-per}, the perturbation part is linear. Then the result \cite[Theorem 1.1]{IM11} can also be applied to get the desired exponents, see e.g. \cite{BG17}. 
\end{remark}

We shall give the proof of Theorem \ref{thm-conj} in the following part.\\

\subsection{Proof of Theorem \ref{thm-conj}}\label{ssec-odd}
In view of \eqref{equ--I1} and \eqref{equ-I1+I2}, it suffices to consider $I_2(v_0,t)$.
By Corollary \ref{cor-b0} we know the unique critical point of $\phi(v_0,\cdot)$ is $\xi_0=(\frac{\pi}{2},\cdots,\frac{\pi}{2})$. 

Let $\phi_{0}(\xi):= \phi(v_0,\xi+\xi_0)$, we choose proper coordinate systems to turn $\phi_0$ into an $\R$-nondegenerate phase, we show that the associated Newton distance is $\frac{6}{2d+1}$ and the dimension of the principle face is $d-1$. Then our conclusion follows from Theorem \ref{thm-new}.\\
  
\textbf{Step 1.} Note that Hess$\,\phi_{0}(0)$ has rank $1$. By Lemma \ref{lem-expand}, there exist a linear transform $\Phi$, a constant $c_\phi$ and $R\in H_{\boldsymbol{w_d},d}$ such that
   \begin{equation*}
        \phi_0\circ\Phi(y) = c_\phi+y_d^2+\mathbf{Q}_{1,1}^{d-1}(y')+R(y).
    \end{equation*}
    
    Suppose that $$d=2\mathbf{k}+1\quad\mbox{with \ some}\quad\mathbf{k}\geq 1.$$ And let $y=\Psi(z)$, with
    \begin{equation*}
         \begin{cases}
           z_i=\dfrac{y_i+y_{i+1}}{2}, \\
            z_{i+1}=\dfrac{y_i-y_{i+1}}{2}, \quad 1\leq i\leq d-2,\quad i~\mbox{odd}, \\
            z_{d}=y_{d},
        \end{cases}
    \end{equation*}
then we get that
 \begin{equation}\label{eq-bian}
        \phi_0\circ\Phi\circ \Psi(z)=c_\phi+z_{d}^2+2Y(z')+R\circ \Psi(z)=:c_\phi+\widetilde{\phi}(z),
    \end{equation}
    where $z'=(z_1,\cdots,z_{d-1})$ and
    \begin{equation}\label{eq-R}
        Y(z')=4\left(\sum_{j=1}^{\mathbf{k}} z_{2j-1}\right)^3-\left(\sum_{j=1}^{\mathbf{k}} z_{2j-1}^3\right)-3\sum_{j=1}^{\mathbf{k}}z_{2j-1}z_{2j}^2.
    \end{equation}

   \textbf{Step 2.} Now we show that $\widetilde{\phi}$ is $\R$-nondegenerate. Let $\mathbf{S}=\widetilde{\phi}-R\circ \Psi$. We shall prove that $\mathbf{S}$ is $\R$-nondegenerate, and for any compact face $\mathcal{W}$ of $\mathcal{N}(\widetilde{\phi})$, it holds that $\widetilde{\phi}_{\mathcal{W}}=\mathbf{S}_{\mathcal{W}}$. Then $\widetilde{\phi}$ is $\R$-nondegenerate.

 Firstly, note that the quadratic term $z_d^2$ and the terms with the even index only appear once in (\ref{eq-R}).
  For any compact face $\Gamma$ on $\mathcal{N}(\mathbf{S})$, the $\Gamma$-part $\mathbf{S}_{\Gamma}$ containing the term $z_{2j-1}z_{2j}^2$ for some $1\le j\le \mathbf{k}$ or the term $z_d^2$ must be $\R$-nondegenerate, since $\partial_{2j} \mathbf{S}_{\Gamma}(z)=2z_{2j-1}z_{2j}$ and (\ref{equ-nondegenerate}) is fulfilled.

Then we reduce the problem to consider the left terms in $Y$, which is the polynomial $ \mathbf{Q}_{4,1}^{\mathbf{k}}$ (recall \eqref{equ-def-Q}) up to a renumbering of the coordinates. Now we show that $\mathbf{Q}_{4,1}^{\mathbf{k}}$ is $\R$-nondegenerate.

For any $k\in \N$ and $\gamma\in \R_+^k$, we define the $(k-1)$-simplex 
    \begin{equation*}
        \Theta_{k-1}^{\gamma}:=\left\{w\in \R^{k}_+:w\cdot \gamma=1\right\}.
    \end{equation*}
Notice that $\mathbf{Q}_{4,1}^{\mathbf{k}}$ is related to the compact fact $\Theta_{\mathbf{k}-1}^{\gamma_1}$, where $\gamma_{1}=(\tfrac{1}{3},\cdots,\tfrac{1}{3})$. We begin with $\mathbf{Q}_{4,1}^{\mathbf{k}}$ itself. Since $\nabla \mathbf{Q}_{4,1}^{\mathbf{k}}(w)=0$ is equivalent to the equation
   \begin{equation*}
4\left(\sum_{j=1}^{\mathbf{k}} w_j\right)^2=w_1^2=w_2^2=\cdots=w_{\mathbf{k}}^2,
    \end{equation*}
    which has only zero solution by a direct computation, then it suffices to consider other compact faces with dimension less than $\mathbf{k}-1$. 
    We claim that for any such face $\mathcal{P}$, the $\mathcal{P}$-part $(\mathbf{Q}_{4,1}^{\mathbf{k}})_\mathcal{P}$ has similar expression as $\mathbf{Q}_{4,1}^{\mathbf{k}}$. 

    Indeed, by the representation theorem (cf. e.g. \cite[Page 68]{BJ77}), we have
\begin{equation*}
         \mathcal{N}(\mathbf{Q}_{4,1}^{\mathbf{k}})=\Theta_{\mathbf{k}-1}^{\gamma_1}+\R_+^{\mathbf{k}}.
    \end{equation*} 
   By \cite[Theorem 12.1]{AB83}, we know that any vertex of $\mathcal{P}$ is also a vertex of $\Theta_{\mathbf{k}-1}^{\gamma_1}$, the vertices of 
   which are 
   $\{3{e_j}\}_{j=1}^{\mathbf{k}}$. Here $e_j$ ($1\le j\le \mathbf{k}$) denotes the standard coordinate vector in $\R^{\mathbf{k}}$.  
   Therefore, without loss of generality we can assume that the vertices of $\mathcal{P}$ is $\{3{e_j}\}_{j=1}^n$ for some $1\leq n< \mathbf{k}$. Then it is clear that $(\mathbf{Q}_{4,1}^{\mathbf{k}})_\mathcal{P}= \mathbf{Q}_{4,1}^{n}$, and by induction we know $\mathbf{Q}_{4,1}^{\mathbf{k}}$ is $\R$-nondegenerate, so is $\mathbf{S}$.

    Now we prove that for every compact face $\mathcal{W}$ of $\mathcal{N}(\widetilde{\phi})$, it holds that $\widetilde{\phi}_{\mathcal{W}}=\mathbf{S}_{\mathcal{W}}$. By (\ref{eq-bian}) and a direct computation, it suffices to prove that no vertex of $\mathcal{N}(\widetilde{\phi})$ comes from the terms 
   $T_1=z_{d}(\sum_{j=1}^{\mathbf{k}} z_{2j-1})^2$ 
   and
   \begin{align*}
       T_{2N+1}&= \sum_{j=1}^{\mathbf{k}} \Big((z_{2j-1}-z_{2j})^{2N+1} + (z_{2j-1}+z_{2j})^{2N+1}\Big)=\sum_{j=1}^{\mathbf{k}}\sum_{m=1}^{N+1}c_{m,N}\,z_{2j-1}^{2m-1}\,z_{2j}^{2(N-m+1)}
   \end{align*}
for any $N\ge 2$, where we also used the expression of $R\circ \Psi$ as in the proof of Lemma \ref{lem-expand}.
   One easily sees that $\mathcal{T}(T_{2N+1})\subset \mbox{ri}\left(\mathcal{N}(Y)\right)$, i.e. the relative interior of $\mathcal{N}(Y)$.
     As for $T_1$,
   we consider an unbounded face of $\mathcal{N}(\mathbf{S})$,
   \begin{align*}
       \mathcal{U}_\mathbf{S}&:=\{\xi\in\R_{+}^{d}:\xi_{2j}=0,\,1\leq j\leq \mathbf{k}\}\cap \mathcal{N}(\mathbf{S})\\
       &=\left\{\xi\in\R_{+}^{d}:\frac{\xi_{d}}{2}+\sum_{j=1}^{\mathbf{k}}\frac{\xi_{2j-1}}{3}\geq 1,\,\xi_{2i}=0,\,1\leq i\leq \mathbf{k}\right\}.
   \end{align*}
   Then it is clear that $\mathcal{T}(T_1)\subset \mbox{ri}(\mathcal{U}_\mathbf{S})$. 
   Therefore, the term $R\circ \Psi$ is negligible, which means that $\widetilde{\phi}_{\mathcal{W}}=\mathbf{S}_{\mathcal{W}}$ for any compact face $\mathcal{W}$ of $\mathcal{N}(\widetilde{\phi})$, so 
 $\widetilde{\phi}$ is $\R$-nondegenerate.\\

   \textbf{Step 3.}
    Now we prove $d_\mathbf{S}=\frac{6}{2d+1}$ and $k_\mathbf{S}=1$. Let $$A_{2j}=\boldsymbol{e_{2j-1}}+2\boldsymbol{e_{2j}},\quad A_{2j-1}=3\boldsymbol{e_{2j-1}}\quad\mbox{for}\ \ 1\le j\le \mathbf{k},\quad\mbox{and}\ \ A_{d}=2\boldsymbol{e_{d}}.$$
    Then $\{A_k\}_{k=1}^{d}\subset \mathcal{T}(\mathbf{S})$, and for any $\lambda>0$, we have
    \[
    \frac{\lambda}{2}\left(A_d+\sum_{j=1}^\mathbf{k}A_{2j}\right)+\frac{\lambda}{6}\,\sum_{j=1}^\mathbf{k}A_{2j-1}=\left(\lambda,\cdots,\lambda\right)\in\R^d.
    \]
    Let $\lambda_0=\frac{6}{2d+1}$, then  $ (\mathbf{k}+1)\frac{\lambda_0}{2}+\mathbf{k}\frac{\lambda_0}{6}=1$ and $\boldsymbol{\lambda_0}:=(\lambda_0,\cdots,\lambda_0)$ is in the convex combination of $\{A_k\}_{k=1}^d$ with strictly positive coefficients. In view of the convexity of $\mathcal{N}(\mathbf{S})$ and the definition, we know $d_\mathbf{S}\le \lambda_0$. However, since $\mathcal{T}(\mathbf{S})\subset \Theta_{d-1}^{\boldsymbol{w_d}}$, 
    we know $\mathcal{N}(\mathbf{S})\subset \Theta_{d-1}^{\boldsymbol{w_d}}+\R_{+}^{d}$. Noting that 
    $$\Big\{(\varrho,\cdots,\varrho)\in\R^d:\varrho>0\Big\}\,\cap\, \Theta_{d-1}^{\boldsymbol{w_d}}=(\lambda_0,\cdots,\lambda_0)=:\boldsymbol{\lambda_0},$$ 
    we get $d_{\mathbf{S}}\ge \lambda_0$, and thus $d_{\mathbf{S}} = \lambda_0=\frac{6}{2d+1}$. 

    Now we prove that $k_{\mathbf{S}}=1$. Notice that $\mathcal{A}=\Theta_{d-1}^{\boldsymbol{w_d}}\cap \mathcal{N}(\mathbf{S})$ is a compact face of $\mathcal{N}(\mathbf{S})$, while $\{A_j\}_{j=1}^d \subset \mathcal{A}$ is linearly independent in $\R^d$, so it is also affinely independent. Thus  $\dim_{\R^d}(\mathcal{A})=d-1$, then by  \cite[Theorem 3.5, Exercises 3.1]{AB83}, we know 
     $\boldsymbol{\lambda_0}\in\mbox{ri}\,(\mathcal{A})$, which shows that $k_{\mathbf{S}}=1$.
     
     In conclusion, we finish the proof of Theorem \ref{thm-conj} by Theorem \ref{thm-new}.\\

\begin{remark}
  From the proof, for odd $d\ge 5$ we have reduced the problem on uniform estimates of DW to $J(t,\, Y+P, \,\psi)$, where $Y$ is as in \eqref{eq-R} and $P$ is a linear perturbation. Moreover, similar argument can also be applied to the setting of the DKG.\\
\end{remark}

\section{Space-time Estimates and nonlinear equation}\label{sec-nonli}

In this section, as a convention,  for a function $h=h(x,t)$ on $\Z^d\times\R$, we write $h=h(t)$ for simplicity. Moerover, for two positive functions $h_1$ and $h_2$, we write $h_1\lesssim h_2$ if there exists $C>0$ independent of $t\in\R$ such that $h_1\leq C h_2$, other dependence of the constants may be ignored. \\

\subsection{$l^p \rightarrow l^q$ estimates}\label{ssec-lplq} In this part we give the proof of Theorem \ref{thm-lplq}.

\begin{proof}[Proof of Theorem \ref{thm-lplq}]

By Theorem \ref{main-thm} and the Plancherel theorem, we have $$|G(t)|_{{\infty}} \lesssim (1+|t|)^{-(\frac{3}{2})^-}\quad\mbox{and}\quad\  |G(t)|_2 \lesssim 1.$$  Therefore, by interpolation we deduce that for any $ 2 \leqslant k \leqslant \infty$, we have
  \begin{equation}\label{equ-Gk}
        |G(t)|_k \lesssim (1+|t|)^{-\zeta_k} .
    \end{equation}

    By Young's inequality, for $r\geq 2$ satisfying
    $1+ \frac{1}{q} = \frac{1}{r}+\frac{1}{p}$, it holds that
  \begin{equation*}
        \begin{aligned}
            |u(t)|_q=|G(t)*f|_q   \leqslant |G(t)|_r |f|_p \lesssim  (1+|t|)^{-\zeta_r} |f|_p 
            =  (1+|t|)^{-\beta_{p,q}} |f|_p .
        \end{aligned}
    \end{equation*}

    For the second assertion of Theorem \ref{thm-lplq}, we use the Sobolev embedding (cf. e.g. \cite[Theorem 3.6]{HD15}), 
for any $g \in l^2$, we have
    \begin{equation}\label{equ-l24}
        \begin{aligned}
            |G(t)*g|_4  & \lesssim  |\nabla(G(t)*g)|_2 = \Big|\mathcal{F}\left(\nabla(G(t)*g)\right)\Big|_2 = \Big| \omega \cdot \mathcal{F}({G(t)*g}) \Big|_2 \\
            &= \Big| \omega \cdot \widehat{G}(t) \cdot \widehat{g}\Big|_2  = \Big| \omega \cdot \frac{\sin t\omega}{\omega} \cdot \widehat{g}\Big|_2  = \Big| \sin t\omega \cdot\widehat{g} \Big|_2  \leqslant |\widehat{g}|_2 = |g|_2 .
        \end{aligned}
    \end{equation}
Moreover, by (\ref{equ-Im}) it holds that 
$$G(x-y,t) = \overline{G(y-x,t)} =G(y-x,t).$$
Hence $\langle G(t)*f,g\rangle=\langle f,G(t)*g \rangle.$
   Combining this with (\ref{equ-l24}), we obtain
   \begin{equation*}
        \begin{aligned}
            |G(t)*f|_2 = \sup_{g \in l^2,|g|_{2}=1} {|\langle G(t)*f,g\rangle|}   \le \sup_{g \in l^2,|g|_{2}=1} {|f|_{\frac{4}{3}} \, |G(t)*g|_4} \lesssim |f|_{\frac{4}{3}} .
        \end{aligned}
    \end{equation*}
    
   By Young's inequality, we also have
    \begin{equation*}
        |G(t)*f|_{\infty} \le (1+|t|)^{-(\frac{3}{2})^-} |f|_1 .
    \end{equation*}
Thus for $2<q<\infty$, the Riesz-Thorin interpolation theorem yields 
    \begin{equation*}
        |u(t)|_q=|G(t)*f|_q \lesssim (1+|t|)^{-\zeta_q}|f|_p,
    \end{equation*}   
    which completes the proof.

\end{proof}

\begin{remark}
   In Theorem \ref{thm-lplq}, we only sharpen the estimate
   on a specific segment, one may conclude similar assertions on other segments using the same method. 
\end{remark}

\begin{corollary}\label{thm-stri/lplq}
     Let $u$ be the solution to \eqref{equ-orig} on $\Z^4$ with $g,F \equiv 0$, if
    \begin{equation*}
        1\leqslant p < r \leqslant \infty,\quad 0<q<\infty\quad\mbox{and}\quad \frac{1}{p}>\frac{1}{2} + \frac{1}{r}+\frac{1}{3q},
    \end{equation*}
   then there exists $C=C(p,q,r)$ such that
    \begin{equation*}
        \|u\|_{L^q_t l^r} \leqslant C |f|_{p}.
    \end{equation*}
   
    Furthermore, if $(\frac{1}{p},\frac{1}{r})$ lies on the segment with vertices $(\frac{3}{4},\frac{1}{2})$ and $(1,0)$, then the same conclusion holds for $(p,q,r)$ satisfying
    $\frac{1}{r}+\frac{1}{3q}< \frac{1}{2}$
    or $(p,q,r)=(\frac{4}{3},\infty,2)$.
\end{corollary}

\begin{proof}
   By Theorem \ref{thm-lplq}, for any $1\leqslant p < r \leqslant \infty$ and $q>\beta_{p,r}^{-1}$, we have
\begin{equation*}
    \begin{aligned}
    \|u\|_{L^q_t l^r}^q 
    & = \|G(t)*f\|_{L^q_t l^r}^q  \lesssim\,|f|_{p}^q \, \int_{\R}\ (1+|t|)^{-q\beta_{p,r} }\,dt \lesssim |f|_p^q.
    \end{aligned}
\end{equation*}

In addition, if $\frac{1}{r}=(-2)\frac{1}{p}+2$ and $1 \leqslant p \leqslant \frac{4}{3}$, the index $\beta_{p,r}$ can be replaced by $\zeta_r$. Then we finish the proof.

\end{proof}

Corollary \ref{thm-stri/lplq} is the first version of Strichartz estimates. In next section, we prove another version, i.e. Theorem \ref{thm-stri}, by a different method. \\

\subsection{Strichartz estimates}\label{ssec-stri/tao}

Before proving Theorem \ref{thm-stri}, we first give some preparations. The following theorem is a direct consequence of \cite[Theorem 1.2]{KT98}.

\begin{theorem}\label{thm-tao}
    Let $d\ge 2$ and a family of operators $\{U(t)\}_{t\in\R}$ satisfy 
    \begin{equation*}
        |U(t)h|_{2} \lesssim |h|_2, \quad\forall\, (h,t)\in l^2(\Z^d)\times\R,
    \end{equation*}
    and the truncated decay estimate for some $\sigma>0$,
    \begin{equation*}
        |U(t)(U(s))^*h|_{\infty} \lesssim (1+|t-s|)^{-\sigma}|h|_{1},\quad \forall\, (h,t,s)\in l^1(\Z^d)\times\R^2,
    \end{equation*}
    where $(U(t))^*$ denotes the adjoint of $U(t)$. If 
    $$q,r \geqslant 2,\quad(q,r,\sigma) \neq (2,\infty,1)\quad\mbox{and}\quad\frac{1}{q} \leqslant \sigma \left(\frac{1}{2}-\frac{1}{r}\right),$$
    while $(\Tilde{q},\Tilde{r})$ satisfies the same conditions, then 
    \begin{equation*}
        \|U(t)F_1\|_{L^{q}_t l^{r}} \lesssim |F_1|_2\quad\mbox{and}\quad 
        \left\|\int_{s<t}U(t)(U(s))^*F_2(s) \, ds\right\|_{L^{q}_t l^{r}} \lesssim \|F_2\|_{L^{\Tilde{q}'}_t l^{\Tilde{r}'}}
    \end{equation*}
    hold for all $(F_1,F_2)\in l^2(\Z^d)\times L_t^{\tilde{q}'} l^{\tilde{r}'}$.
\end{theorem}

The following Lemma \ref{lem-omega} can be found in \cite{MCRN17,MS22}, we give an alternative proof here.

\begin{lemma}\label{lem-omega}
Let $d\ge 2$, then 
    \begin{equation*}
    (1+|x|)^{-d-\nu}\lesssim \left| \int_{\mathbb{T}^d} e^{ix\cdot \xi} \,\omega(\xi)^{\nu}\, d \xi\right| \lesssim  (1+|x|)^{-d-\nu}
\end{equation*}
holds for all $x\in\Z^d$ with $|x|$ large enough, where $\nu= \pm 1$.
\end{lemma}
\begin{proof}
    We only prove the case $\nu=-1$, the other case is similar. For any $s\in\R$,  $$\mathbf{u}_{s}(\xi):=|\xi|^s,\quad\forall\,\xi\in\R^d.$$
    By the distribution theory, if $s\in (-d,0)$, there exists $C=C(d,s)$ such that
\begin{equation}\label{*}
\mathcal{F}(\mathbf{u}_s)=C\,\mathbf{u}_{-d-s}.
\end{equation}
Since $\omega^{-1}\in L^1({\mathbb{T}^d})$ for $d\geq 2$, we  have 
\begin{equation*}
    \int_{\mathbb{T}^d} e^{ix\cdot \xi} \omega(\xi)^{-1}\, d \xi=\int_{\mathbb{R}^d}e^{ix\cdot \xi} \omega(\xi)^{-1}\eta(\xi)\,d \xi=\mathcal{F}{\left(\omega^{-1}\eta\right)}(x), \ \ \forall\,x\in\Z^d,
\end{equation*}
where $\eta$ is as in (\ref{eq-per}). Now we split
$$
\mathcal{F}{\left(\omega^{-1}\eta\right)}= 
\mathcal{F}\left((\omega^{-1}\eta\, \mathbf{u}_1-1)\mathbf{u}_{-1}\right)+\mathcal{F}(\mathbf{u}_{-1})=:\mathcal{F}(g_1)+\mathcal{F}(\mathbf{u}_{-1}).
$$
Inserting a smooth cut-off $\psi$ with $\psi=1$ near the origin, we get
$$
\mathcal{F}{\left(\omega^{-1}\eta\right)}=\mathcal{F}(g_1\psi)+\mathcal{F}(g_1(1-\psi))+\mathcal{F}(\mathbf{u}_{-1})=:\mathbf{O}_1+\mathbf{O}_2+\mathbf{O}_3.
$$

By (\ref{*}) with $w=-1$, we get that $\mathbf{O}_3$ behaves like $\mathbf{u}_{-d+1}$. Moreover, for any $\beta\in\mathbb{N}^d$, a direct computation and an induction yield
$
\partial^{\beta}(g_1\psi)=\mathcal{O}(\mathbf{u}_{1-|\beta|}).
$ When $|\beta|=d$, we get $\partial^{\beta}(g_1\psi)\in L^{1}(\mathbb{R}^d)$. Then $\mathbf{O}_{1}=\mathcal{O}(\mathbf{u}_{-d})$.

As for $\mathbf{O}_2$, a similar argument shows that 
$
\partial^{N}(g_1(1-\psi))\in L^{1}(\mathbb{R}^d)
$
for all $N\in\mathbb{N}$ large enough. Therefore, we finish the proof.

\end{proof}

In the sequel, for proper functions $h_1$ and $h_2$, the opertator $h_1(D)$ is defined as
\begin{equation}\label{def-D}
    h_1(D)h_2 := \mathcal{F}^{-1}(h_1(\omega)) * h_2,\quad\mbox{with}\ \,  \omega \,\ \mbox{in \eqref{equ-gree}}.
\end{equation}
Note that the operator $D$ coincides with $\sqrt{-\Delta}$ in operator theory, see (\ref{equ-solu-semigroup}). 

\begin{lemma}\label{lem-d^-1}
      Let $d\ge 2$ and $b>\frac{d}{d-1}$, then $\frac{1}{D}: l^a(\Z^d) \rightarrow l^b(\Z^d)$ is bounded for any $1\le a\le \frac{bd}{b+d}$.
\end{lemma}

\begin{proof}
By Lemma \ref{lem-omega} with $\nu=-1$, we get
\begin{equation}\label{equ-5.6}
        \bigg|\frac{1}{D}\,f\bigg|_{b} \lesssim \big|(1+|\cdot|)^{-d+1} * |f|\big|_{b}= \sup_{ h \in l^{b'},|h|_{b'}=1}  \langle(1+|\cdot|)^{-d+1} * |f|,h\rangle.
    \end{equation}
     For any $b>\frac{d}{d-1}$, the discrete Hardy-Littlewood-Sobolev inequality (cf. e.g. \cite[Section 1]{HLY15}) and H\"{o}lder inequality yield
    \begin{equation*}
        \begin{aligned}
        \left\langle(1+|\cdot|)^{-d+1} * |f|,h\right \rangle  
            &  \lesssim |f|_{\frac{bd}{b+d}} + |f|_b  \lesssim |f|_{\frac{bd}{b+d}},
        \end{aligned}
    \end{equation*}
     where in the last ``$\lesssim$" we used the fact that $l^{\frac{bd}{b+d}}(\Z^d) \subset l^{b}(\Z^d)$ since $\frac{bd}{b+d} < b$.
     Inserting this into (\ref{equ-5.6}), we finish the proof.
     
\end{proof}

Now we give the proof of Theorem \ref{thm-stri}.

\begin{proof}[Proof of Theorem \ref{thm-stri}]
    By Duhamel's formula,
    \begin{equation}\label{equ-duha}
        u(x,t) = \cos (tD) g(x) + \frac{\sin (tD)}{D} f(x) + \int_0^t \frac{\sin(t-s)D}{D}F(x,s)ds.
    \end{equation}
    
Let the truncated operators $U_{\pm}(t):=\chi_{[0,\infty)}(t) \,e^{\pm itD}$ in Theorem \ref{thm-tao},  where $\chi_{[0,\infty)}(t)=1$ if $t\geq 0$, and $\chi_{[0,\infty)}(t)=0$ if $t<0$. One can check that the following assertions hold:
   \begin{gather*}
       \mbox{(a)}  \
        |U_{\pm}(t)f_1|_2 \leqslant |f_1|_2,\quad
        \mbox{(b)} \ (U_{+}(t))^*=U_{-}(t),\\
        \mbox{(c)} \ e^{itD}e^{isD}f_1 = e^{i(t+s)D}f_1, \quad \forall\, (t,s)\in\R^2.
   \end{gather*}
   
    By (b), (c) and Young's inequality, we deduce that
    \begin{equation*}
        \begin{aligned}
            |U_{+}(t)(U_{+}(s))^*f_2|_{\infty} &  \leqslant |e^{itD}e^{-isD}f_2|_{\infty}  = |\mathcal{F}(e^{i(t-s)\omega})*f_2|_{\infty}\\ &\leqslant |\mathcal{F}(e^{i(t-s)\omega})|_{\infty} |f_2|_1 
            \lesssim (1+|t-s|)^{-\frac{3}{2}}\log(2+|t-s|)|f_2|_1,
        \end{aligned}
    \end{equation*}
    which is the truncated decay estimate. {In the last ``$\lesssim$" we used the fact that 
    \begin{equation}\label{equ-fourier}
        |\mathcal{F}(e^{it\omega})|_{\infty}\lesssim (1+|t|)^{-\frac{3}{2}}\log(2+|t|),\quad\forall\,t\in\R.
    \end{equation} 
    The proof of (\ref{equ-fourier}) is  similar to that of Theorem \ref{main-thm}, see also the remark in \cite[Page 692]{S98}.}
    Now Theorem \ref{thm-tao} implies 
    \begin{equation}\label{equ-tao1}
    \|e^{itD} f\|_{L^q_t l^r} \lesssim |f|_{2}\quad\mbox{and}\quad 
    \left\|\int_0^t e^{i(t-s)D}F(x,s)ds \right\|_{L^q_t l^r} \lesssim \|F\|_{L_t^{\tilde{q}'} l^{\tilde{r}'}},
    \end{equation}
    for any $t,s \in \R$, where $(q,r)$ and $(\tilde{q},\tilde{r})$ satisfy the conditions in Theorem \ref{thm-tao} with $\sigma=(\frac{3}{2})^-$.
   
    By (\ref{equ-duha}), (\ref{equ-tao1}) and Euler's formula, we have
    \begin{equation}\label{equ-duhatao}
        \begin{aligned}
        \|u\|_{L^q_t l^r} 
        & \leqslant \| \cos (tD) g\|_{L^q_t l^r} +\left\|\frac{\sin (tD)}{D} f \right\|_{L^q_t l^r} +\; \left\|\int_0^t \frac{\sin(t-s)D}{D}F(x,s)ds \right\|_{L^q_t l^r} \\
        & \lesssim |g|_{2} + \left|\frac{1}{D}f\right|_{2} + \left\|\frac{1}{D}F\right\|_{L_t^{\tilde{q}'} l^{\tilde{r}'}}\lesssim |g|_2 + |f|_{{\frac{4}{3}}} + \|F\|_{L_t^{\tilde{q}'} l^{ 
 \frac{4\tilde{r}'}{4+\tilde{r}'}  }},
    \end{aligned}
    \end{equation}
 where in the last ``$\lesssim$" we used Lemma \ref{lem-d^-1}. Thus we finish the proof.

    
\end{proof}

As a direct corollary, we have:
\begin{corollary}\label{cor-stri/tao}
    Let $u$ be the solution to \eqref{equ-orig} on $\Z^4$ with $g,F \equiv 0$, then for any $q,r\ge 2$ satisfying $\frac{1}{q}<\frac{3}{2}(\frac{1}{2}-\frac{1}{r})$, there exists $C=C(q,r)>0$ such that 
    \begin{equation*}
        \|u\|_{L^q_t l^r} \leqslant C |f|_{{\frac{4}{3}}}.
    \end{equation*}
\end{corollary}

\begin{remark}\label{rmk-compare}
    Theorem \ref{thm-stri/lplq} 
    and Corollary \ref{cor-stri/tao} are both Strichartz type estimates. When $p=\frac{4}{3}$, 
     Corollary \ref{cor-stri/tao} admits a wider range of $(q,r)$. However, for a fixed pair $(p,r)$, Theorem \ref{thm-stri/lplq} may provide a wider range of $q$ than Corollary \ref{cor-stri/tao}. 
    Moreover,  
    Theorem \ref{thm-stri/lplq} admits initial data arbitrarily close to $l^2$, which may be useful in taking limit processes.
\end{remark}

By the same token, Strichartz estimates on $\Z^2$ and $\Z^3$ can be obtained by these methods except one case. In fact, there are some difficulties if we apply Theorem \ref{thm-tao} on $\Z^2$ directly. 
Now we state the results on $\Z^3$ analogue to Theorem \ref{thm-stri}.

\begin{theorem}\label{thm-stri-d=3}
    Let $u$ be the solution to \eqref{equ-orig} on $\Z^3$. If indices $q,r,\widetilde{q},\widetilde{r}$ satisfy
    \begin{equation*}
        q,r,\tilde{q},\tilde{r} \geq 2,\quad\frac{1}{q} \leq \frac{7}{6}\left(\frac{1}{2}-\frac{1}{r}\right)\quad\mbox{and}\quad
        \frac{1}{\tilde{q}} \leq \frac{7}{6}\left(\frac{1}{2}-\frac{1}{\tilde{r}}\right),    
    \end{equation*}
    then there exists $C=C(q,r,\widetilde{q},\widetilde{r})$ such that
    \begin{equation*}
        \|u\|_{L^q_t l^r} \leqslant C \left(|g|_{2} + |f|_{{\frac{6}{5}}} + ||F||_{L_t^{\tilde{q}'} l^{\frac{3\tilde{r}'}{3+\tilde{r}'} }  }\right).
    \end{equation*}
\end{theorem}

\subsection{Nonlinear equations}\label{ssec-non}

We use space-time estimates to obtain global well-posedness for small initial data. 

\begin{theorem}\label{thm-global-1}
    Let $k\ge 4$, $F(s) = |s|^{k-1}s$, $\forall\, s\in\R$ and $g=0$ in \eqref{equ-orig}.
  If $f\in l^1(\Z^4)$ with $|f|_1$ sufficiently small, then for any $2\le p\le \infty$, the global solution to \eqref{equ-orig} exists in $l^p$. We denote it by $u_k$, then it holds that
    \begin{equation}\label{equ-uk}
        |u_k(t)|_p \lesssim (1+|t|)^{-\zeta_p},\quad \forall\, t\in\R.
    \end{equation}
\end{theorem}

\begin{proof}
    We use the contraction mapping theorem. Let $\zeta_k$ be as in Theorem \ref{thm-lplq}. We consider the metric space
    \begin{equation*}
        \mathcal{M}:=\left\{h : \|h\|_{\mathcal{M}}= \sup_{t\in\R}(1+|t|)^{\zeta_k}|h(\cdot,t)|_{k} \leqslant 2C_0|f|_{1}\right\},
    \end{equation*}
    with $C_0=C_0(k,|f|_1)$ to be determined later. We also define $\Lambda$ on $\mathcal{M}$,
    \begin{equation*}
        \Lambda h :=\Lambda h(t) = G(t)*f + \int_{0}^{t}G(t-s)*F(h(s)) \, ds,\quad\forall\, h\in\mathcal{M}.
    \end{equation*}
    If $h\in\mathcal{M}$, we prove that $\Lambda h \in \mathcal{M}$. By \eqref{equ-Gk} and the fact that $|F(u(s))| \le |u(s)|_{k}^k$, $\forall\, s\in\R$, it holds that
    \begin{equation*}
            |\Lambda h(t)|_k 
            \leqslant (1+|t|)^{-\zeta_k}|f|_1 + \left|\int_{0}^{t} (1+|t-s|)^{-\zeta_k}|h(s)|^k_k\, ds\right|,\quad\forall\,t\in\R,
    \end{equation*}
    where we also used Theorem \ref{thm-lplq} with $\beta_{1,k}=\zeta_k$. Note that $1+|t| \leqslant (1+|t-s|)(1+|s|)$ for all $(s,t)\in\R^2$, we obtain
    \begin{equation*}
        (1+|t|)^{\zeta_k} |\Lambda h(t)|_k \leqslant |f|_1 + \|h\|_{\mathcal{M}}^k \int_{\R}\, (1+|s|)^{\zeta_k - k \zeta_k}\, ds=:|f|_1 + \|h\|_{\mathcal{M}}^k \,\mathbf{U}_k.
    \end{equation*}
    Since $k\ge 4$, then $\zeta_k-k \zeta_k < -1$ and $\mathbf{U}_k<\infty$. 
    Thus for $C_0=C_0(k)$ large enough,
    \begin{equation*}
        \|\Lambda h\|_{\mathcal{M}} \leqslant |f|_1 + \mathbf{U}_{k} \|h\|_{\mathcal{M}}^k \leqslant C_0 (|f|_1+\|h\|_{\mathcal{M}}^k).
    \end{equation*}
    If $|f|_1$ is sufficiently small such that $(2C_0)^k |f|_1^k\leqslant |f|_1$, we have
    \begin{equation*}
        \|\Lambda h\|_{\mathcal{M}} \leqslant C_0 (|f|_1+\|h\|_{\mathcal{M}}^k) \leqslant 2 C_0 |f|_1,
    \end{equation*}
    which shows that $\Lambda u \in \mathcal{M}$. 
    
    Moreover, one can show that $\Lambda$ is a contraction. Indeed, there exists $C=C(k)$ such that 
    \begin{equation*}
        \begin{aligned}
        \|u_1-u_2\|_{\mathcal{M}}
            & \leqslant C \mathbf{U}_k (\|u_1\|_{\mathcal{M}}^{k-1}+\|u_2\|_{\mathcal{M}}^{k-1})\|u_1-u_2\|_{\mathcal{M}} \\
            & \leqslant 2C\mathbf{U}_k(2C_0|f|_1)^{k-1}\|u_1-u_2\|_{\mathcal{M}}<\frac{1}{2}\|u_1-u_2\|_{\mathcal{M}},\quad \forall \,u_1,u_2\in\mathcal{M},
        \end{aligned}
    \end{equation*}
    as long as $|f|_1$ is sufficiently small. Therefore, $\Lambda$ admits a fixed point in $\mathcal{M}$, which is a solution to (\ref{equ-orig}). 
    
    By the same token, we can prove \eqref{equ-uk} for general $p \geq 2$, which finishes the proof of Theorem \ref{thm-global-1}.
    
\end{proof}

\begin{theorem}\label{thm-global-2}
    Let $F(s) = |s|^{k-1}s$, $\forall\,s\in\R$ with some $k \geqslant 3$, $p_k=\frac{2k+1}{2k}$ and $q_k=\frac{2k+1}{2}$. If $f\in l^{p_k}(\Z^4)$ with $|f|_{p_k}$ sufficiently small, then the solution to \eqref{equ-orig} exists. We denote it by $v_k$, then it holds that
    \begin{equation*}
        |v_k(t)|_{q_k} \lesssim (1+|t|)^{-\frac{3}{2}\left(1-\frac{2}{q_k}\right)},\quad\forall\,t\in\R.
    \end{equation*}
\end{theorem}

This result can be proved in the same manner as Theorem \ref{thm-global-1}.\\

\section*{Acknowledgement}

The authors wish to express gratitude to Prof. J.-C. Cuenin, I. A. Ikromov and D. M\"{u}ller for their helpful explanations. C.Bi is indebted to Prof. Hong-Quan Li for many useful suggestions. B.H. is supported by NSFC, No. 11831004, and by Shanghai Science and Technology Program [Project No. 22JC1400100].\\
	


\printbibliography

\end{document}